\theoremstyle{plain}
   \newtheorem{theorem}{Theorem}[section]
   \newtheorem{proposition}[theorem]{Proposition}
   \newtheorem{lemma}[theorem]{Lemma}
   \newtheorem{corollary}[theorem]{Corollary}
   \newtheorem{conjecture}[theorem]{Conjecture}
\theoremstyle{definition}
   \newtheorem{definition}[theorem]{Definition}
   \newtheorem{example}[theorem]{Example}
   \newtheorem{remark}[theorem]{Remark}
\numberwithin{equation}{section}
\newcommand\sym{{\mathfrak{S}}}
\newcommand\sh{{\mathrm{shape}}}
\newcommand\len{{\mathrm{length}}}
\newcommand\shf{{\mathrm{shf}}}
\newcommand\st{{\mathrm{std}}}
\newcommand\kc{{\mathcal{Y}}}
\newcommand\Knuth{{\underset{K}{\sim}}}
\newcommand\nuth{{\underset{K^*}{\sim}}}
\newcommand\Inv{{\mathrm{Inv}}}
\newcommand\Des{{\mathrm{Des}}}
\newcommand\Par{{\mathrm{Par}}}
\newcommand{\cellsize}{16}
\newlength{\cellsz} \setlength{\cellsz}{\cellsize\unitlength}
\newsavebox{\cell}
\sbox{\cell}{\begin{picture}(\cellsize,\cellsize)
\put(0,0){\line(1,0){\cellsize}} \put(0,0){\line(0,1){\cellsize}}
\put(\cellsize,0){\line(0,1){\cellsize}}
\put(0,\cellsize){\line(1,0){\cellsize}}
\end{picture}}
\newcommand\cellify[1]{\def\thearg{#1}\def\nothing{}%
\ifx\thearg\nothing \vrule width0pt height\cellsz depth0pt\else
\hbox to 0pt{\usebox{\cell} \hss}\fi%
\vbox to \cellsz{ \vss \hbox to \cellsz{\hss$#1$\hss} \vss}}
\newcommand\tableau[1]{\vtop{\let\\\cr
\baselineskip -16000pt \lineskiplimit 16000pt \lineskip 0pt
\ialign{&\cellify{##}\cr#1\crcr}}}
\begin{document}
\title[The Weak Order] {Inner tableau translation property of the weak order and related results}

\author{ M\"{U}GE  TA\c{S}K{I}N }
\address{Bogazici Universitesi, Istanbul, Turkey}

\thanks{This research forms a part of the author's doctoral thesis at the Univ. of
Minnesota, under the supervision of Victor Reiner, and partially
supported by NSF grant DMS-9877047.}

\begin{abstract}
Let $SYT_{n}$ be the set of all standard Young tableaux with $n$
cells and $\leq_{weak}$  be Melnikov's  the weak order on $SYT_n$.  The aim of this paper is to introduce  a conjecture
 on the weak order, named the {\it property of inner tableau translation}, and  discuss its  significance. We will  also prove the
 conjecture for some  special cases.\end{abstract}

\maketitle

\section{Introduction}

The weak  order first introduced  by Melnikov  and well studied in
\cite{Melnikov1, Melnikov2,Melnikov3} due its strong connections to
Kazhdan-Lusztig  and geometric order on standard Young tableaux,
where the latter are induced from the representation theory of special linear
algebra and symmetric group. We have  the following inclusion among
all of these  three orders:
$$
\text{ weak order } \subsetneq \text{ Kazhdan-Lusztig (KL) order }
\subseteq \text{ geometric order } .
$$
The fact that its definition  uses just  the  combinatorics of tableaux such as   Knuth relations and the weak order on symmetric group,
 gives the weak order an important place among these orders. On the other hand the only justification for  its well-definedness
is induced from above inclusion, in other words there is no self
contained  proof of this basic fact.

Our aim here is to  bring the attention to the following  conjecture
on  the weak order (which is first asked  in \cite{Taskin}), called  the property of {\it inner
tableau translation}.  This property  is known to be satisfied by Kazhdan-Lusztig  and geometric orders  and its  importance on the weak order  relies on
the fact that it provides  a self contained proof for the
well-definedness of this order.

\begin{conjecture} Given two tableau $S<_{weak}T$  having  the same inner tableau $R$,   replacing $R$ with
 another same shape tableau $R'$   in $S$ and $T$   still preserves the weak order.
 \end{conjecture}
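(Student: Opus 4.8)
We sketch the approach we will take to the conjecture. The plan is to work on the level of permutations, using the standard description: $S\leq_{weak}T$ if and only if $P(w)=S$, $P(v)=T$ and $w\le v$ for some $w,v\in\sym_n$, where $P(\cdot)$ denotes the RSK insertion tableau, $\le$ is the right weak order on $\sym_n$, and the Knuth classes are the fibres of $w\mapsto P(w)$. The first step is a reduction to a single elementary move. By Haiman's theorem that the elementary dual equivalences act transitively on the standard Young tableaux of a fixed shape, $R'$ can be joined to $R$ by a chain of elementary dual equivalences $D_j$ with $2\le j\le m-1$, $m=|\sh(R)|$. Because $D_j$ only rearranges the entries $j-1,j,j+1\le m$ and the swap it performs is governed by a local condition on the three cells involved, applying $D_j$ to the inner tableau agrees with applying it to the whole tableau; hence each step of the chain sends $S\mapsto D_j(S)$ and $T\mapsto D_j(T)$, modifies the inner tableau exactly as prescribed, leaves the outer part alone, and yields tableaux that again have a common inner tableau. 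Iterating along the chain, it suffices to prove: \emph{if $S\leq_{weak}T$ and $S,T$ have a common inner tableau of size $m$, then $D_j(S)\leq_{weak}D_j(T)$ for $2\le j\le m-1$}; and here, since $j-1,j,j+1\le m$, the pair of entries that $D_j$ interchanges is the same in $S$ as in $T$, being dictated by the common inner tableau.

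For this statement I would return to permutations and use the elementary coplactic move: for any word $w$ with $P(w)=S$ there is a word $w'$, obtained from $w$ by left multiplication by $s_{j-1}$ or by $s_j$ (which one depending on $w$), with $P(w')=D_j(S)$ and $Q(w')=Q(w)$. So the problem reduces to choosing representatives $w$ of the Knuth class of $S$ and $v$ of that of $T$, with $w\le v$, such that after the coplactic move the resulting words $w'$, $v'$ still satisfy $w'\le v'$. Controlling this comes down to a careful choice of the positions of the small values $j-1,j,j+1$ in $w$ and in $v$ so that they fit together --- all the while keeping $w,v$ in the prescribed Knuth classes, preserving $w\le v$, and not disturbing the common inner tableau. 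A convenient device for the last point is the easy fact, immediate from the Knuth relations (in each of which the witness lies, in value, between the two transposed letters), that the restriction of an insertion tableau to its entries $\le m$ depends only on the subword of letters $\le m$.

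This simultaneous normalisation of representatives is the main obstacle, and the hypothesis that $S$ and $T$ share an inner tableau must be exactly what is needed to push it through: without it one would be asking for a full dual-equivalence invariance of $\leq_{weak}$. In the special cases we treat below --- for instance when $\sh(R)$, or the skew shape $\sh(S)/\sh(R)$, is a single row, a single column, or has only a bounded number of cells --- the rigidity that is gained lets us write down explicit representatives $w,v$ coming from row and column reading words, for which the required position conditions hold and the comparability $w'\le v'$ of the coplactic images can be checked directly; feeding these into the reduction above settles those cases.
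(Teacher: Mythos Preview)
The statement you are attempting is an open \emph{conjecture} in the paper: the paper does not prove it in general, only for the special cases where $\sh(R)$ has at most two rows, at most two columns, or is a hook. Your write-up is honest about this---you identify the ``simultaneous normalisation of representatives'' as the main obstacle and only claim to settle special cases---so in that sense your proposal and the paper are in agreement that the general statement remains open.

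Your overall strategy coincides with the paper's. Both reduce to a single elementary dual Knuth move $D_j$ on the common inner tableau (the paper via Proposition~2.14, you via Haiman's transitivity of dual equivalence), both then pass to permutation representatives $\sigma\in\kc_S$, $\tau\in\kc_T$ with $\sigma\lessdot\tau$ in right weak order, and both observe that the coplactic move (left multiplication by $s_{j-1}$ or $s_j$) preserves the weak-order comparison except in one problematic configuration of the letters $j-1,j,j+1$. The paper's Section~4 then handles that configuration by passing to row words and performing explicit Knuth slides; your sketch gestures at the same device.

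There is, however, a genuine error in your first paragraph. You write ``$S\leq_{weak}T$ if and only if $P(w)=S$, $P(v)=T$ and $w\le v$ for some $w,v\in\sym_n$''. This ``if and only if'' is false: the weak order on tableaux is defined as the \emph{transitive closure} of that relation, and the paper's Example~2.6 exhibits $R<_{weak}T$ with no such direct witnesses. You must first reduce to covering relations $S\lessdot_{weak}T$ (for which direct witnesses do exist), and then argue that any chain of coverings from $S$ to $T$ stays inside $SYT_n^R$; the latter needs the segment-restriction lemma together with antisymmetry of $\leq_{weak}$, which is precisely what the conjecture is meant to establish, so some care is needed to avoid circularity. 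The paper sidesteps this by stating the conjecture directly for covering relations within $SYT_n^R$.

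Finally, the special cases you name are weaker than the paper's. If $\sh(R)$ is a single row or column there is only one standard tableau of that shape, so $R'=R$ and there is nothing to prove; the case ``$\sh(S)/\sh(R)$ a single row or column'' is covered by the paper's Lemma~4.1, since then $\st(S_{[k+1,n]})\lneq_{weak}\st(T_{[k+1,n]})$ is forced. The paper's nontrivial cases are $\sh(R)$ with exactly two rows, two columns, or a hook, and these require the explicit Knuth manipulations of Section~4.
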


In the following  we  first provide the  definitions and related
background for the weak order. In the third section, by assuming the
conjecture  we will provide a self contained the proof for  the well
definedness  the weak order and we close this section with the
discussion on how this conjecture plays specific role in studies of
Poirier and Reutenauer Hopf algebra on standard Young tableaux. In
the last section, we prove the conjecture for the case when the
inner tableau $R$ has hook shape or a shape which  consists of two
rows or two columns.

\section{Related background}

\subsection{Definition of the weak order}
The definition of the weak order uses well known  Robinson-Schensted
$(RSK)$ correspondence which bijectively assigns to every
permutation $w\in S_n$ a pair of same  shape tableaux $(I(v), R(w))
\in SYT_{n}\times SYT_{n}$, where  $I(w)$ and $R(w)$ are called the
{\it insertion} and {\it recording tableau} of $w$ respectively. On
the other hand  an   equivalence relation $\Knuth$ on $S_n$ due to
Knuth \cite{Knuth1}  plays a crucial role in this correspondence.
Namely:
$$ u~\Knuth~ w \iff I(u)=I(w).$$ We will denote the corresponding equivalence classes in
$S_n$ by $\{\mathcal{Y}_T\}_{T\in SYT_n}$.

Let us explain these algorithms briefly. Denote by
$(I_{i-1},R_{i-1})$ the same shape tableaux obtained by insertion
and recording algorithms on the first $i-1$ indices of $w=w_1\ldots
w_n$. In order to get $I_i$, if $w_{i}$ is greater then the last
number on the first row of $I_{i-1}$, it is concatenated to the
right side of the first row of $I_{i-1}$, otherwise, $w_i$ replaces
the smallest number, say $a$ among all numbers in the first row
greater then $w_i$ and this time insertion algorithm is applied to
$a$ on the next row. Observe that after finitely many  steps the
insertion algorithm terminates with a new added cell. The resulting
tableau is then $I_i$ and recording tableau $R_i$ is found by
filling this new cell in $R_{i-1}$ with the number $i$. We
illustrate these algorithms with the following example

\begin{example}
 Let $w=52413$. Then,
$$\begin{aligned}
&I_1=5 &\Rightarrow
I_2=\begin{array}{c}2\\5\end{array}&\Rightarrow
I_3=\begin{array}{cc}2&4\\5\end{array}&\Rightarrow
I_4=\begin{array}{cc}1&4\\2\\5\end{array}&\Rightarrow
I_5=\begin{array}{cc}1&3\\2&4\\5\end{array}=I(w)\\
&R_1=1&\Rightarrow
R_2=\begin{array}{c}1\\2\end{array}&\Rightarrow
R_3=\begin{array}{cc}1&3\\2\end{array}&\Rightarrow
R_4=\begin{array}{cc}1&3\\2\\4\end{array}&\Rightarrow
R_5=\begin{array}{cc}1&3\\2&5\\4\end{array}=R(w)
\end{aligned}
$$
\end{example}

\begin{definition}
We say  $u, w \in \sym_{n}$ differ by one {\it Knuth relation},
written $u \stackrel{K}{\cong} w$, if $$ \begin{array}{cl}
\mbox{either }  & w=x_{1} \ldots yxz \ldots x_{n} \mbox{ and }
          u=x_{1} \ldots yzx \ldots x_{n} \\
 \mbox{or }      & w=x_{1} \ldots xzy \ldots x_{n} \mbox{ and }
          u=x_{1} \ldots zxy \ldots x_{n}
\end{array} $$
for some  $ x<y<z$. Two permutations are called  {\it Knuth
equivalent}, written $u  \stackrel{K}{\cong}w$, if there is a
sequence of permutations such that
\[ u=u_{1}  \stackrel{ K}{\cong} u_{2} \ldots
\stackrel{K}{\cong} u_{k}= w.\]
\end{definition}

Sch\"utzenberger's {\it jeu de taquin}  slides
\cite{Schutzenberger2} are one of the combinatorial operations on
tableaux that we apply often in the following sextions.

 \begin{definition}
Let $\lambda=(\lambda_{1},\lambda_{2}, \ldots,\lambda_{k}) $ and
$\mu=(\mu_{1},\mu_{2}, \ldots,\mu_{l}) $ be two Ferrers diagrams
such that $\mu \subset \lambda$. Then the corresponding {\it  skew
diagram} is defined to be the set of cells
$$ \lambda/\mu=\{ c : c \in \lambda, c \notin \mu\}. $$
A skew diagram is called {\it normal} if $\mu=\varnothing$. A {\it
partial skew tableau} of shape $\lambda/\mu$ is an array of distinct
integers elements whose rows and columns increase. A {\it standard
skew tableau} of shape $\lambda/\mu$ is partial skew tableau whose
elements are $\{1,2,\ldots,n\}$.
\end{definition}

 We next  illustrate  the  forward and backward slides of
 Sch\"utzenberger's {\it jeu de taquin} \cite{Schutzenberger2} without the definition.

\begin{example}
Let  $P=\begin{array}{ccc}&&4 \\&2&5  \\1& 3 &
\\\end{array}$ and $ Q=\begin{array}{ccc} &  2 & 4 \\& 3&  5
\\1&&\\ \end{array}$. Below we illustrate a forward and backward slide on $P$ and $Q$ through the cells indicated by dots.

$$ \begin{array}{ccc}& \bullet  & 4 \\& 2 & 5  \\1& 3 &
\\\end{array}\rightarrow
\begin{array}{ccc}& 2  & 4 \\& \bullet & 5  \\1& 3 &  \\\end{array}\rightarrow
\begin{array}{ccc}&  2 & 4 \\& 3 &  5\\1& \bullet  &  \\\end{array}=
\begin{array}{ccc}&  2 & 4 \\& 3&  5 \\1&  &
\\\end{array}$$

$$ \begin{array}{ccc} &
2 & 4 \\& 3 &  5\\1& \bullet  &
\\\end{array} \rightarrow
\begin{array}{ccc} & 2  & 4 \\& \bullet & 5  \\1& 3 &   \end{array} \rightarrow
\begin{array}{ccc} & \bullet  & 4 \\& 2 & 5  \\1& 3 &  \end{array} =
\begin{array}{ccc} &   & 4 \\ & 2 & 5  \\ 1& 3 &  \\
\\ \end{array}$$
\end{example}

The other main ingredient of the weak order is  the ({\it right})
{\it weak} {\it Bruhat order}, $\leq_{weak}$, on  $S_n$ which
obtained by taking the  transitive closure of the following
relation:
 $$u \leq_{weak} w  ~\text{if}~ w=u \cdot s_i  \text{ and } \len(w)=\len(u)+1$$
 where $s_i$ denotes  the adjacent transposition $(i, i+1)$ and $\len(w)$ measures the size of a reduced word of $w$.  The weak order has an alternative characterization
\cite[Prop. 3.1]{Bjorner2}  in terms of {\it (left) inversion sets}
namely
$$u \leq_{weak} w ~\text{if and only if} ~~ \Inv_L(u)\subset
\Inv_L(w)$$ where $\Inv_L(u):=\{(i,j): 1 \leq i < j \leq n ~\text{
and }~
         u^{-1}(i) > u^{-1}(j)\}$.

\begin{definition}
\label{weak-order-def}
 The {\it weak order} $(SYT_n,\leq_{weak})$,
first introduced by Melnikov \cite{Melnikov1} under the name {\it
induced Duflo order}, is the partial order induced by taking
transitive closure of  the following relation:
$$
\begin{aligned}
S\leq_{weak} T & \mbox{ if there exist } ~\sigma\in \kc_S, ~\tau\in
\kc_T
                \mbox{ such that } ~\sigma\leq_{weak} \tau.
\end{aligned}
$$
\end{definition}

 The necessity of taking the transitive closure in the definition of the
 weak order is illustrated by the following example (cf.
Melnikov \cite[Example 4.3.1]{Melnikov1}).

\begin{example}
Let $R=\scriptstyle{\begin{array}{ccc} 1& 2 & 5 \\  3& 4 &
\end{array}}$,
\hskip .05in $S=\scriptstyle{\begin{array}{ccc} 1& 4 & 5 \\  2&  &
\\ 3 & & \end{array}}$,
\hskip .05in $T=\scriptstyle{
\begin{array}{ccc} 1 & 4 &  \\  2 & 5 &  \\ 3 & &\end{array}}$
with
$$
 \begin{array}{ll}
       &  \mathcal{Y}_{R}=\{ 31425, 34125, 31452, 34152, 34512 \}, \\
       & \mathcal{Y}_{S}=\{ 32145, 32415, 32451, 34215, 34251 ,34521 \}, \\
       & \mathcal{Y}_{T}= \{ 32154, 32514, 35214, 32541, 35241\}. \\
 \end{array}
 $$
Here $R <_{weak} S $ since $34125 <_{weak} 34215$, and $S <_{weak} T
$ since $32145 <_{weak} 32154$. Therefore $ R <_{weak} T $. On the
other hand, for every $ \rho \in \mathcal{Y}_{R}$ and for every
$\tau \in \mathcal{Y}_{T}$ we have $(2,4) \in \Inv_L(\rho)$ but
$(2,4) \notin \Inv_{L}(\tau)$.
\end{example}

\begin{figure}
\begin{center}
 \includegraphics[scale=0.45]{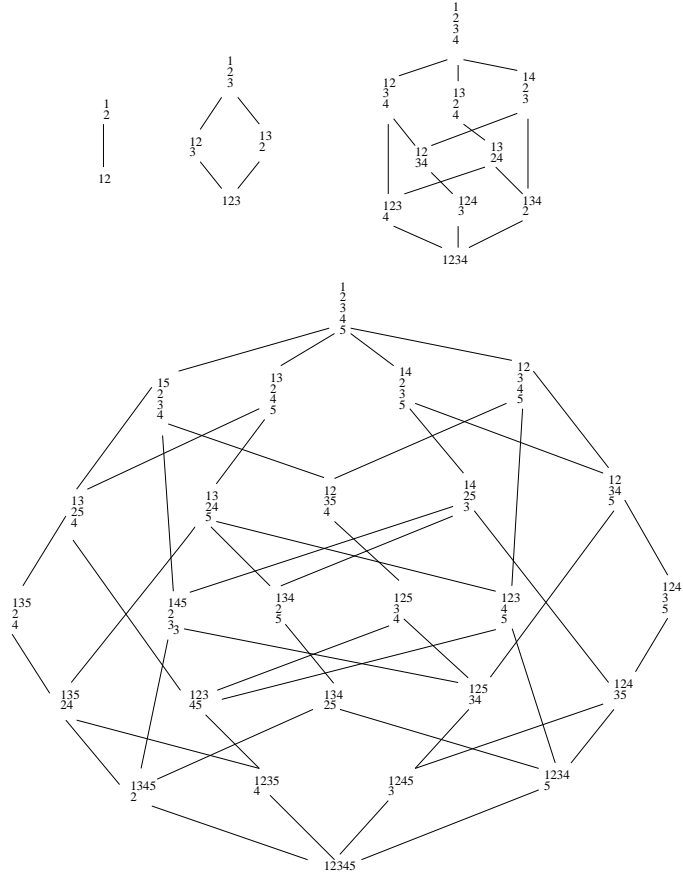} \caption{ \label{figure1}
 The weak  on $SYT_{n}$ for $n = 2,3,4,5$.}\end{center}
\end{figure}

\subsection{Some basic properties of the weak order} For $u\in S_n$  and  $1\leq i< j \leq n$,  let  $u_{[i,j]}$ be the
word obtained by  restricting $u$ to the segments $[i,j]$  and
$\st(u_{[i,j]}) \in S_{j-i+1}$ be the permutation  obtained from
$u_{[i,j]}$ by subtracting $i-1$ from each letter.

Similarly for $S\in SYT_n$ and  $1\leq i< j \leq n$,  let
$S_{[i,j]}$ be the normal shape  tableau obtained by  restricting
$S$ to the segments $[i,j]$ and  by applying  Sch\"utzenberger's
back word jeu-de-taquin slides. Then $\st(S_{[i,j]}) \in
SYT_{j-i+1}$ be the tableau obtained from $S_{[i,j]}$ by subtracting
$i-1$ from each letter.

 In fact $\Inv_L(u)\subset \Inv_L(w)$ gives
$\Inv_L(u_{[i,j]})\subset \Inv_L(w_{[i,j]})$ for all $1\leq i < j
\leq n$ and  hence
\begin{equation}\label{welldefined-weak-order1}
u \leq_{weak} w ~\mbox{ implies } ~\st(u_{[i,j]}) \leq_{weak} \st(w_{[i,j]})
~\mbox{ for all } ~1\leq i<j\leq n.
\end{equation}

The following   basic fact about $RSK$, Knuth equivalence, and
jeu-de-taquin  are essentially due to Knuth and Sch\"utzenberger;
see Knuth \cite[Section 5.1.4]{Knuth} for detailed explanations.

\begin{lemma}
\label{j-d-t-initial-final} Given $u \in S_n$,
 let $I(u)$ be the insertion tableau of $u$.  Then for $1\leq i<j \leq
 n$,
$$
\st(I(u)_{[i,j]}) = I(\st(u_{[i,j]})).$$
\end{lemma}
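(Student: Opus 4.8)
The plan is to prove the identity by induction on $j-i$, reducing the general case of a segment $[i,j]$ to the two ``extreme'' operations of deleting the largest letter $n$ (which handles passing from $[i,n]$ to $[i,n-1]$, i.e.\ a forward slide from the corner cell containing $n$) and deleting the smallest letter $1$ while subtracting $1$ from everything (which handles passing from $[1,j]$ to $[2,j]$, i.e.\ a backward slide into the cell that held $1$). A general segment $[i,j]$ is obtained from $[1,n]$ by $n-j$ applications of the first operation followed by $i-1$ applications of the second, and at each stage the relevant tableau is normal, so it suffices to check that each single operation commutes with $\st$ in the way asserted. Throughout I will use the fundamental fact (Knuth--Sch\"utzenberger) that $RSK$ intertwines restriction-of-words with jeu-de-taquin: if one deletes a letter from $u$ and performs the corresponding elementary slide on $I(u)$, the result is $I$ of the shortened word. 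This is exactly what is referenced just before the lemma, so I may quote it.

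First I would treat deletion of the maximum. If $u\in S_n$ has its maximal letter, then in $I(u)$ the letter $n$ sits in an outer corner of the first row (by the insertion rule, $n$ is always concatenated to the end of row $1$ and never bumped). Deleting $n$ from the word $u$ yields $u_{[1,n-1]}$, and on the tableau side this corresponds to simply erasing that corner cell; since $n$ is in row $1$ no genuine slide is triggered, so $I(u)_{[1,n-1]} = I(u_{[1,n-1]})$ on the nose, and standardization does nothing here since the letters are already $1,\dots,n-1$. For the minimum, I would instead use the classical ``delete-$1$'' slide: removing the letter $1$ from $u$, subtracting $1$ everywhere to get $\st(u_{[2,n]})$, corresponds on $I(u)$ to emptying the cell containing $1$ (which is the $(1,1)$ cell), sliding jeu-de-taquin-style into it, and then subtracting $1$ from each entry; this is precisely the statement that $\st(I(u)_{[2,n]}) = I(\st(u_{[2,n]}))$, again by the Knuth--Sch\"utzenberger theorem. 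Iterating and composing these two reductions, and using that $\st$ is compatible with further restriction of an already-standardized normal tableau, gives the claim for arbitrary $[i,j]$.

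The step I expect to be the main obstacle is the bookkeeping that shows the two reductions genuinely compose to give $[i,j]$, and in particular that the intermediate objects one gets by restricting to $[i,n]$ and then to $[i,j]$ agree with restricting directly to $[i,j]$ — i.e.\ that back-slide restriction is itself ``associative'' in the relevant sense, so that $\st\bigl((S_{[i,n]})_{[i,j]}\bigr) = \st(S_{[i,j]})$. This is really a statement about jeu-de-taquin on skew shapes (the order in which one rectifies/evacuates letters outside $[i,j]$ does not matter), and it can be extracted from the confluence of jeu-de-taquin slides; alternatively one can sidestep it by proving the lemma directly for $[i,j]$ via a single induction that peels off one letter at a time from either end, invoking the elementary one-letter case at each step. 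Either way the combinatorial content is entirely in the classical theory already cited, and the remaining work is careful indexing rather than anything deep.
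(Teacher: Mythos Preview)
The paper does not actually prove this lemma; it is stated as a classical fact due to Knuth and Sch\"utzenberger with a pointer to \cite[Section~5.1.4]{Knuth}. So there is no ``paper's own proof'' to compare against, and your inductive peel-off-the-ends strategy is a perfectly reasonable way to unpack that citation.

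That said, your argument for the ``delete the maximum'' step contains a factual error. You assert that in $I(u)$ the letter $n$ always sits at the end of the first row because ``$n$ is always concatenated to the end of row $1$ and never bumped.'' This is false: $n$ is placed at the end of row~$1$ \emph{at the moment it is inserted}, but any smaller letter inserted afterwards can bump it. Already $u=21$ gives $I(u)=\tableau{{1}\\{2}}$ with $n=2$ in row~$2$; more dramatically $u=n\,(n{-}1)\cdots 2\,1$ puts $n$ in row~$n$. What \emph{is} true is that $n$, being maximal, always occupies an outer corner cell of $I(u)$ (nothing can lie to its right or below it), so removing that cell still leaves a straight-shape standard tableau and no slide is needed. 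The identity $I(u)_{[1,n-1]}=I(u_{[1,n-1]})$ then follows, but not for the reason you give: one clean justification is that a single Knuth relation on $u$ either does not involve $n$ at all, or has $n$ playing the role of the largest of the three letters, and in either case restricting to $[1,n-1]$ preserves Knuth equivalence; hence $u_{[1,n-1]}$ is Knuth equivalent to the row word of $I(u)$ with $n$ deleted.

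With that correction in place, your plan (iterate deletions of the current maximum to pass from $[1,n]$ to $[1,j]$, then iterate the Sch\"utzenberger delete-$1$ slide to pass to $[i,j]$, invoking jeu-de-taquin confluence for the associativity bookkeeping) goes through. You were right to flag the associativity issue as the main thing to check; it is exactly the standard confluence statement and needs no new idea.
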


Therefore we have following:

\begin{lemma}
\label{inner.restriction.lemma}The weak order {\it restricts to
 segments}, i.e.,
 $$S \leq T~~\text{ implies }~~\st(S_{[i,j]}) \leq
 \st(T_{[i,j]})~~ \text{ for all } ~~1\leq i<j \leq n.
 $$
\end{lemma}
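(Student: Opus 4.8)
The plan is to reduce the statement about tableaux to the already-recorded statement about permutations, equation \eqref{welldefined-weak-order1}, using the RSK correspondence and Lemma \ref{j-d-t-initial-final} as the bridge. First I would unwind the definition: since $S \leq_{weak} T$ holds by Definition \ref{weak-order-def}, it suffices by transitivity to treat a single covering step in that relation, i.e. to assume there exist $\sigma \in \kc_S$ and $\tau \in \kc_T$ with $\sigma \leq_{weak} \tau$ in $S_n$; the general case then follows because $\st((\cdot)_{[i,j]})$ will be shown to be order-preserving on each such step and the weak order on $SYT_n$ is the transitive closure. So fix such a $\sigma, \tau$ and fix $1 \leq i < j \leq n$.

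Next I would apply \eqref{welldefined-weak-order1} to get $\st(\sigma_{[i,j]}) \leq_{weak} \st(\tau_{[i,j]})$ in $S_{j-i+1}$. Passing through RSK, this yields $I(\st(\sigma_{[i,j]})) \leq_{weak}$-comparable-via-their-classes to $I(\st(\tau_{[i,j]}))$, i.e. these two insertion tableaux are related in $(SYT_{j-i+1}, \leq_{weak})$ by a single step of the defining relation (the permutations $\st(\sigma_{[i,j]})$ and $\st(\tau_{[i,j]})$ lie in the relevant Knuth classes). By Lemma \ref{j-d-t-initial-final}, $I(\st(\sigma_{[i,j]})) = \st(I(\sigma)_{[i,j]})$ and likewise for $\tau$. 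Finally, since $\sigma \in \kc_S$ means $I(\sigma) = S$ and $\tau \in \kc_T$ means $I(\tau) = T$, these equalities read $\st(S_{[i,j]}) $ and $\st(T_{[i,j]})$, and we conclude $\st(S_{[i,j]}) \leq_{weak} \st(T_{[i,j]})$, as desired.

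The one subtle point — and the step I expect to need the most care — is the bookkeeping between "the permutation-level inequality $\st(\sigma_{[i,j]}) \leq_{weak} \st(\tau_{[i,j]})$" and "the tableau-level inequality $\st(S_{[i,j]}) \leq_{weak} \st(T_{[i,j]})$." One must check that the two permutations $\st(\sigma_{[i,j]})$ and $\st(\tau_{[i,j]})$ really are witnesses of the form required in Definition \ref{weak-order-def}: that is, that $\st(\sigma_{[i,j]}) \in \kc_{\st(S_{[i,j]})}$ and $\st(\tau_{[i,j]}) \in \kc_{\st(T_{[i,j]})}$. This is exactly the identity $I(\st(\sigma_{[i,j]})) = \st(I(\sigma)_{[i,j]}) = \st(S_{[i,j]})$ supplied by Lemma \ref{j-d-t-initial-final}, so no new ingredient is needed; but one should state it cleanly so that the single-step argument genuinely produces a single-step instance of the defining relation on $SYT_{j-i+1}$, after which transitivity closes the argument. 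Note also that taking the transitive closure is harmless here precisely because restriction-to-segments is being proved to be a monotone map of posets, so it carries chains to chains.
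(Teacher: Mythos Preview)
Your proposal is correct and follows exactly the approach the paper intends: the lemma is stated immediately after \eqref{welldefined-weak-order1} and Lemma~\ref{j-d-t-initial-final} with only the phrase ``Therefore we have following,'' and your argument spells out precisely that deduction, including the transitive-closure bookkeeping. No additional ideas are needed.
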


\begin{remark}
Melnikov shows in  \cite[Page 45]{Melnikov1} that the geometric
order also restricts to segments. On the other hand the   same fact
about Kazhdan-Lusztig order  was first shown by Barbash and Vogan
\cite{Barbash-Vogan} for arbitrary finite Weyl groups (see also work
by Lusztig \cite{Lusztig2}) whereas the generalization to Coxeter
groups  is due to Geck \cite[Corollary 3.4]{Geck}.
\end{remark}

Now recall that {\it (left) descent set} of a permutation $\tau$ is
defined by
$$
\Des_{L}(\tau) :=\{i: 1 \leq i \leq n-1 ~\text{ and }~
       \tau^{-1}(i) > \tau^{-1}(i+1)  \}. \\
$$
On the other hand the {\it descent set} of the standard Young
tableau $T$ is described intrinsically by
$$
\begin{aligned}
\Des(T) &:=
   \{(i,i+1): 1 \leq i \leq n-1 \text{ and }  \\
   & \qquad \qquad  i+1 \text{ appears in a row below } i \text{ in }T\}.
\end{aligned}
$$

As a consequence of a well-known properties of $RSK$ we have the
following basic fact:

\begin{lemma}
\label{descent.lemma0}
 For any $\tau \in \mathcal{Y}_{T}$ we have
$$ \Des_{L}(\tau)=\Des(T)
$$
i.e., the left descent set  is constant on Knuth classes.
\end{lemma}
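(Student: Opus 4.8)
The plan is to prove Lemma~\ref{descent.lemma0} directly from the Robinson--Schensted correspondence, using the well-known fact that inserting a letter from the left or reading off the recording tableau is compatible with descents. Concretely, I would recall the standard property of $RSK$ that for any permutation $\tau\in\sym_n$, the letter $i$ lies weakly left of (i.e.\ in a row strictly above, or the same row as, but in the statement we want strictly below) the letter $i+1$ in the insertion tableau $I(\tau)$ if and only if $i$ appears before $i+1$ in the one-line notation of $\tau$; more precisely, $i+1$ appears in a row strictly below $i$ in $I(\tau)$ if and only if $\tau^{-1}(i)>\tau^{-1}(i+1)$. Since all $\tau\in\mathcal{Y}_T$ share the same insertion tableau $I(\tau)=T$ by the definition of the Knuth class, this immediately gives $\Des_L(\tau)=\Des(T)$ for every such $\tau$, and in particular the left descent set is constant on the class.

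The key steps, in order, are as follows. First I would fix $\tau\in\mathcal{Y}_T$ and $i\in\{1,\dots,n-1\}$, and translate the condition $i\in\Des_L(\tau)$, i.e.\ $\tau^{-1}(i)>\tau^{-1}(i+1)$, into the statement that $i+1$ is read before $i$ when we scan the one-line word of $\tau$. Second, I would invoke the basic behaviour of column insertion / the bumping algorithm: the classical result (see Knuth \cite[Section~5.1.4]{Knuth}, or equivalently the statement that $RSK$ intertwines descents, which also follows from Lemma~\ref{j-d-t-initial-final} applied with $[i,j]=[i,i+1]$) that $\st(I(\tau)_{[i,i+1]})$ is the two-cell tableau with $1,2$ in one column precisely when $i+1$ precedes $i$ in $\tau$, and the two-cell row tableau otherwise. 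Third, I would observe that $\st(I(\tau)_{[i,i+1]})$ is a vertical domino exactly when $i+1$ sits in a row strictly below $i$ in $I(\tau)=T$, which is by definition the condition $(i,i+1)\in\Des(T)$. Combining these three equivalences yields $i\in\Des_L(\tau)\iff(i,i+1)\in\Des(T)$, and since the right-hand side does not depend on the choice of $\tau$ in the class, the lemma follows.

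Actually, the cleanest route is to lean directly on Lemma~\ref{j-d-t-initial-final}, which is already available: taking $[i,j]=[i,i+1]$ there gives $\st(I(\tau)_{[i,i+1]})=I(\st(\tau_{[i,i+1]}))$. Now $\st(\tau_{[i,i+1]})$ is a permutation in $\sym_2$, namely $12$ if $i$ precedes $i+1$ in $\tau$ and $21$ otherwise; its insertion tableau is $\tableau{1 & 2}$ in the first case and $\tableau{1\\2}$ in the second. On the left-hand side, $\st(I(\tau)_{[i,i+1]})$ records exactly the relative vertical position of the cells containing $i$ and $i+1$ in $I(\tau)=T$: it is the horizontal domino when $i+1$ is weakly to the right and in the same or a higher row, and the vertical domino when $i+1$ lies in a strictly lower row. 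Matching the two descriptions gives $\tau^{-1}(i)>\tau^{-1}(i+1)\iff i+1\text{ appears in a row below }i\text{ in }T$, i.e.\ $\Des_L(\tau)=\Des(T)$.

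I do not expect a serious obstacle here, since the statement is a standard corollary of $RSK$ theory; the only point requiring a little care is the bookkeeping in the second route --- one must check that the skew-tableau restriction $I(\tau)_{[i,i+1]}$ followed by back jeu-de-taquin genuinely produces the vertical domino exactly in the descent case, which is where the hypothesis that rows and columns of $T$ strictly increase is used (the cells of $T$ labelled $i$ and $i+1$ are never in the same row and same column, and rectification cannot swap their relative row order). Once that is in place, the independence of the right-hand side from $\tau\in\mathcal{Y}_T$ is immediate, giving the claimed constancy of the left descent set on Knuth classes.
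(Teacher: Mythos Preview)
Your argument is correct. The paper itself does not supply a proof of Lemma~\ref{descent.lemma0}; it simply records the statement as ``a consequence of a well-known property of $RSK$'' and moves on. Your route via Lemma~\ref{j-d-t-initial-final} with $[i,j]=[i,i+1]$ is a perfectly good way to make that folklore precise: the right-hand side $I(\st(\tau_{[i,i+1]}))$ is the vertical domino exactly when $i+1$ precedes $i$ in $\tau$, and the left-hand side $\st(T_{[i,i+1]})$ is the vertical domino exactly when $i+1$ sits in a strictly lower row of $T$ than $i$ (the case analysis you sketch---using that consecutive values in a standard tableau cannot sit strictly south-east of one another---is the only thing to check, and it goes through). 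So there is nothing to compare against; your proof simply supplies the details the paper omits.
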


We let $(2^{[n-1]}, \subseteq)$ be the Boolean algebra of all
subsets of $[n-1]$ ordered by inclusion. \vskip .2in

\begin{lemma}
\label{descent.lemma}
Let $\leq$ be any order on $SYT_n$ which is stronger than the weak
order and restricts to segments. Then the map
$$(SYT_n, \leq) \mapsto (2^{[n-1]}, \subseteq)$$
sending any tableau $T$ to its descent set $\Des(T)$ is order
preserving.
\end{lemma}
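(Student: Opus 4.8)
The plan is to reduce the statement to a computation in the two-element poset $(SYT_2,\le)$, by applying the hypothesis that $\le$ restricts to segments to the length-two segments $[i,i+1]$, $i\in[n-1]$. The crucial (and essentially only substantive) ingredient is the following description of descents through restriction to such a segment: for every $U\in SYT_n$ and every $i\in[n-1]$,
\[
i\in\Des(U)\iff \st(U_{[i,i+1]})\ \text{has shape}\ (1,1),
\]
while $\st(U_{[i,i+1]})$ has shape $(2)$ otherwise. To prove this I would fix some $\tau\in\mathcal{Y}_U$. By Lemma~\ref{descent.lemma0}, $i\in\Des(U)$ iff $i\in\Des_L(\tau)$, that is, iff $i+1$ occurs before $i$ in the one-line notation of $\tau$, which is exactly to say that $\st(\tau_{[i,i+1]})$ is the transposition $s_1\in\sym_2$ (it is the identity of $\sym_2$ otherwise). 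Applying Lemma~\ref{j-d-t-initial-final} with $j=i+1$ then gives $\st(U_{[i,i+1]})=\st(I(\tau)_{[i,i+1]})=I(\st(\tau_{[i,i+1]}))$, and since the insertion tableau of $s_1$ has shape $(1,1)$ whereas that of the identity of $\sym_2$ has shape $(2)$, the claim follows.

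Next I would pin down $\le$ on $SYT_2$ itself. Write $R_0$ for the tableau of shape $(2)$ and $C_0$ for the one of shape $(1,1)$, so $\mathcal{Y}_{R_0}=\{12\}$ and $\mathcal{Y}_{C_0}=\{21\}$. Since $\Inv_L(12)=\varnothing\subsetneq\{(1,2)\}=\Inv_L(21)$, we have $12\leq_{weak}21$, hence $R_0<_{weak}C_0$ by the definition of the weak order on $SYT_2$. Because $\le$ is stronger than the weak order and is a partial order, antisymmetry now forces $(SYT_2,\le)$ to be precisely the two-element chain $R_0<C_0$; in particular $C_0$ is its maximum.

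To conclude, it suffices to show that $S\le T$ implies $\Des(S)\subseteq\Des(T)$. So assume $S\le T$ and let $i\in\Des(S)$. Since $\le$ restricts to segments, $\st(S_{[i,i+1]})\le\st(T_{[i,i+1]})$ in $SYT_2$; by the first step $\st(S_{[i,i+1]})=C_0$, and as $C_0$ is the top element of $(SYT_2,\le)$ this forces $\st(T_{[i,i+1]})=C_0$, so $i\in\Des(T)$ by the first step again. Hence $\Des(S)\subseteq\Des(T)$, which is exactly the assertion that $T\mapsto\Des(T)$ is order preserving.

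I do not expect any genuinely hard step: the whole argument is an assembly of Lemma~\ref{j-d-t-initial-final}, Lemma~\ref{descent.lemma0}, and the description of the weak order on $SYT_2$. The one place demanding a little care is the standardization bookkeeping in the first step --- checking that restricting $U$ (resp.\ $\tau$) to $\{i,i+1\}$ and subtracting $i-1$ really produces the claimed element of $SYT_2$ (resp.\ $\sym_2$) --- together with the (easily overlooked) fact that the hypothesis ``$\le$ is stronger than the weak order'' is being invoked at the level of $SYT_2$, which is what excludes the antichain and determines $\le$ on $SYT_2$ as the chain.
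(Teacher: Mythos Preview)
Your argument is correct and is precisely the natural one: reduce to $SYT_2$ via the segment-restriction hypothesis, identify $\Des$ with the shape of the length-two restriction using Lemmas~\ref{j-d-t-initial-final} and~\ref{descent.lemma0}, and conclude since $\leq$ on $SYT_2$ is the two-element chain. The paper states Lemma~\ref{descent.lemma} without proof, so there is nothing to compare against; your write-up is exactly the kind of routine verification the author is suppressing. The only point worth flagging is the one you already note in your last paragraph: the phrase ``restricts to segments'' implicitly requires $\leq$ to be a compatible family of orders on all $SYT_m$ (each stronger than the weak order), not just a single order on $SYT_n$, since the conclusion $\st(S_{[i,i+1]})\leq\st(T_{[i,i+1]})$ lives in $SYT_2$; this is the reading consistent with Lemma~\ref{inner.restriction.lemma} and with how the paper uses the lemma.
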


We denote by $(\Par_n, \leq^{op}_{dom})$ the set of all partitions
of the number $n$ ordered by the {\it opposite (or dual) dominance
order}, that is, $\lambda \leq^{op}_{dom} \mu$ if
$$
\lambda_1 + \cdots + \lambda_k \geq \mu_1 + \cdots + \mu_k \text{
for all } k.
$$

The following can be easily deduced from  Greene's theorem \cite{Greene}.

\begin{lemma}
\label{shape.lemma} $S\leq_{weak} T$ implies $\sh(T) \leq_{dom} \sh(S) $
\end{lemma}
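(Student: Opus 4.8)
The plan is to reduce to the defining (one-step) relation of $\leq_{weak}$ on $SYT_n$ and then invoke Greene's theorem. Since $(\Par_n,\leq_{dom})$ is transitive, it suffices to treat a single basic step: suppose $\sigma\in\kc_S$ and $\tau\in\kc_T$ with $\sigma\leq_{weak}\tau$ in $\sym_n$, so that $\Inv_L(\sigma)\subseteq\Inv_L(\tau)$. Recall that $I(\sigma)=S$ and $I(\tau)=T$, hence $\sh(S)=\sh(\sigma)$ and $\sh(T)=\sh(\tau)$ are the corresponding $RSK$ shapes.

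Call a subset $V=\{v_1<v_2<\cdots<v_m\}\subseteq[n]$ an \emph{increasing run} of a permutation $w$ if $w^{-1}(v_1)<w^{-1}(v_2)<\cdots<w^{-1}(v_m)$, i.e. the letters of $V$ occur from left to right in $w$ in increasing order; equivalently $\{w^{-1}(v):v\in V\}$ is the position set of an increasing subsequence of $w$. Greene's theorem says that for every $k\geq 1$,
$$\sh(w)_1+\cdots+\sh(w)_k=\max\bigl\{\,|V_1\cup\cdots\cup V_k|\ :\ V_1,\dots,V_k \text{ increasing runs of } w\,\bigr\}.$$
(That cardinalities of unions taken over positions or over values agree is immediate, since $w$ is a bijection.)

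The key observation is that every increasing run $V$ of $\tau$ is an increasing run of $\sigma$. Indeed, if $v<v'$ both lie in $V$ then $v$ precedes $v'$ in $\tau$, so $(v,v')\notin\Inv_L(\tau)$; hence $(v,v')\notin\Inv_L(\sigma)$, so $v$ precedes $v'$ in $\sigma$ as well. Thus, given increasing runs $V_1,\dots,V_k$ of $\tau$, the very same sets $V_1,\dots,V_k$ are increasing runs of $\sigma$ with identical union $V_1\cup\cdots\cup V_k$. Taking the maximum over all such families and applying Greene's theorem to both $\tau$ and $\sigma$ gives, for every $k$,
$$\sh(T)_1+\cdots+\sh(T)_k=\sh(\tau)_1+\cdots+\sh(\tau)_k\ \leq\ \sh(\sigma)_1+\cdots+\sh(\sigma)_k=\sh(S)_1+\cdots+\sh(S)_k,$$
that is, $\sh(T)\leq_{dom}\sh(S)$. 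For a general $S\leq_{weak}T$, one chains together the basic steps and uses transitivity of $\leq_{dom}$.

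The only point requiring care is the bookkeeping that identifies an increasing subsequence with its set of values, so that the inversion-set containment $\Inv_L(\sigma)\subseteq\Inv_L(\tau)$ can be read directly as "$\tau$ has no more increasing runs than $\sigma$"; beyond that there is no real obstacle, the content being carried entirely by Greene's theorem.
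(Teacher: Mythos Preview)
Your argument is correct and is exactly the approach the paper has in mind: the paper states only that the lemma ``can be easily deduced from Greene's theorem'' and gives no further details, and what you have written is precisely that deduction, carried out via the inversion-set characterization of weak order and the observation that every increasing run of $\tau$ is one of $\sigma$. There is nothing to add.
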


Recall that for a standard young tableau $T$, $T^t$ denotes the
transpose of $T$ whereas  $T^{evac}$  denotes the tableau found by
applying
 the Sch\"utzenberger's \cite{Schutzenberger1} evacuation map on $T$.  For any $\tau=\tau_1\tau_2\ldots\tau_n \in \mathcal{Y}_{T}$  we have
$$\begin{aligned}
&\tau^t=\tau_{n}\tau_{n-1}\ldots\tau_1 \in \mathcal{Y}_{T^t}\\
&\tau^{evac}=(n+1-\tau_{n})(n+1-\tau_{n-1})\ldots(n+1-\tau_1) \in \mathcal{Y}_{T^{evac}}
\end{aligned}$$

\begin{proposition}
\label{order.preserving.maps} Suppose $S\leq_{weak}T$ in $SYT_n$.
Then
\begin{enumerate}
\item  $S^{evac} \leq_{weak} T^{evac}$.
\item  $T^t \leq_{weak}S^t$.
\end{enumerate}
\end{proposition}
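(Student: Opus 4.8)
The plan is to reduce both assertions to a single ``link'' of the transitive closure defining $\leq_{weak}$ on $SYT_n$, and then to transport that link down to $\sym_n$, using the identities for $\tau^{t}$ and $\tau^{evac}$ on Knuth classes recalled just before the proposition together with the inversion-set description $u\leq_{weak}w\iff\Inv_L(u)\subseteq\Inv_L(w)$.

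First I would record how word reversal $u\mapsto u^{t}=u_n\cdots u_1$ and reverse-complementation $u\mapsto u^{evac}=(n+1-u_n)\cdots(n+1-u_1)$ act on left inversion sets. Writing $p(i)=u^{-1}(i)$ for the position of the value $i$ in $u$, one computes $(u^{t})^{-1}(i)=n+1-p(i)$ and $(u^{evac})^{-1}(i)=n+1-p(n+1-i)$; plugging these into the definition of $\Inv_L$ gives
\[
\Inv_L(u^{t})=\{(i,j):1\le i<j\le n\}\setminus\Inv_L(u),\qquad
\Inv_L(u^{evac})=\{(n+1-j,\,n+1-i):(i,j)\in\Inv_L(u)\}.
\]
The first operation is set complementation, hence inclusion-reversing; the second is the image under the inclusion-preserving involution $(i,j)\mapsto(n+1-j,n+1-i)$ of the pair-set, hence inclusion-preserving. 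Consequently, on $\sym_n$, $u\leq_{weak}w$ implies $w^{t}\leq_{weak}u^{t}$ and $u^{evac}\leq_{weak}w^{evac}$. (Equivalently, $u^{t}=u\,w_0$ and $u^{evac}=w_0\,u\,w_0$, and left and right multiplication by the longest element $w_0$ are each anti-automorphisms of the right weak order; the one-line computation above makes this self-contained.)

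Now suppose there exist $\sigma\in\kc_S$ and $\tau\in\kc_T$ with $\sigma\leq_{weak}\tau$ in $\sym_n$. By the recalled identities, $\sigma^{t}\in\kc_{S^{t}}$, $\tau^{t}\in\kc_{T^{t}}$, $\sigma^{evac}\in\kc_{S^{evac}}$ and $\tau^{evac}\in\kc_{T^{evac}}$. The previous paragraph gives $\tau^{t}\leq_{weak}\sigma^{t}$ and $\sigma^{evac}\leq_{weak}\tau^{evac}$ in $\sym_n$, so Definition~\ref{weak-order-def} yields $T^{t}\leq_{weak}S^{t}$ and $S^{evac}\leq_{weak}T^{evac}$ in $SYT_n$ for this single link. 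Finally, if $S\leq_{weak}T$ in general, choose a chain $S=U_0,U_1,\dots,U_k=T$ in which each consecutive pair admits such a $\sigma,\tau$; applying the single-link conclusion to each pair and using transitivity of $\leq_{weak}$ on $SYT_n$ gives both $S^{evac}\leq_{weak}T^{evac}$ and $T^{t}\leq_{weak}S^{t}$.

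I expect no genuine obstacle here: the entire content sits in the first step, and the only thing needing care is the index bookkeeping in the two $\Inv_L$ formulas---in particular getting the substitution $(i,j)\mapsto(n+1-j,n+1-i)$ for evacuation right, and verifying that it is a bijection of the pair-set so that inclusions are indeed preserved (not reversed) for $u\mapsto u^{evac}$, while complementation reverses them for $u\mapsto u^{t}$.
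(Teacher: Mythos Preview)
Your proof is correct and follows essentially the same route as the paper: the paper observes that $w\mapsto ww_0$ and $w\mapsto w_0w$ are anti-automorphisms of $(\sym_n,\leq_{weak})$, so $w\mapsto w_0ww_0$ is an automorphism, and then invokes $I(ww_0)=I(w)^t$ and $I(w_0ww_0)=I(w)^{evac}$. You simply unpack the ``clearly'' by computing the effect on $\Inv_L$ directly, and you make the passage through the transitive closure explicit where the paper leaves it implicit.
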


\begin{proof} Let $w_0$ be the longest element in $S_n$.  Then  the maps
$$
 w \mapsto w_0w ~\text{ and }~w \mapsto ww_0
 $$
are clearly  anti-automorphisms and hence   $w \mapsto w_0ww_0$  is
a automorphism of   $(S_n,\leq_{weak})$. On the other hand $I(ww_0)$
is just the  transpose tableau of $I(w)$ \cite{Schensted} whereas
$I(w_0ww_0)$ is nothing but the evacuation of $I(w)$
\cite{Schutzenberger1}.
\end{proof}


\subsection{Inner tableau translation property}
The   {\it dual Knuth} relations $\nuth$ on $S_n$ plays the main
role in the definition of inner tableau translation property. In its most
basic form  this relation is defined  through the Knuth relations
applied on the inverse of permutations. Namely,
$$\sigma ~\nuth~\tau  ~ \text{in}~ S_n ~ \text{if and olny if}~ \sigma^{-1} ~\Knuth~
\tau^{-1}.$$ An equivalent definition can be given by taking the
transitive closure of  the following:  We say $ \sigma$ and $\tau$
differs by  a single dual Knuth relation determined by the triple
$\{i, i+1, i+2\}$  if
$$\begin{aligned}
\mbox{ either } & \sigma=\ldots i+1 \ldots i \dots i+2 \ldots \mbox{
and }
\tau=\ldots i+2 \ldots i \dots i+1 \ldots \\
\mbox{ or } & \sigma=\ldots i+1 \ldots i+2 \dots i \ldots \mbox{ and
} \tau=\ldots i \ldots i+2 \dots i+1 \ldots
\end{aligned}$$

Since left descent sets are all equal for the  permutations lying in
the same Knuth class,  the  dual Knuth relation defines an action  on the standard Young tableaux. In order to present this action let us give the following definition.

\begin{definition} For  $S\in SYT_n$ let  $A=\{(i,j)\} $ be a cell  lying in  $\sh(S)$, where $i$ denotes the row number counted
from the top and $j$ denotes the column number counted   from the
left. Then
$$\begin{aligned}(S,A,\mathrm{ne}):=&\{ (k,l) ~\mid~  k<i ~\text{and}~ l\geq j
\}\\
(S,A,\mathrm{sw}):=&\{ (k,l) ~\mid~  k\geq i ~\text{and}~ l<j \}
\end{aligned}
$$
\end{definition}

Suppose that $\sigma \in \kc_S$ has $i\in\Des(\sigma)$ but
$i\not\in\Des(\sigma)$. Therefore  $\sigma$ has one of the following
form
 $$\sigma=\ldots i+1 \ldots i \dots i+2 \ldots ~~\text{or}~~\sigma=\ldots i+1 \ldots i+2\dots i\ldots.$$
Now denote by $C_i, C_{i+1}$ and $C_{i+2}$ the cells labeled by
$i,i+1$ and $i+2$ in $S$, respectively.  Then
$$ \text{ either }~  C_i \in (S,C_{i+1},\mathrm{ne})\cap (S,C_{i+2},\mathrm{sw}) ~\text{ or }~C_{i+2} \in (S,C_{i+1},\mathrm{ne})\cap
 (S,C_{i},\mathrm{sw})$$
 and the action of a single  dual Knuth relation determined by the triple
$\{i, i+1, i+2\}$ on $S$  interchanges the places of  $i+1$ and
$i+2$ in the fist case and  it interchanges
  the places of $i$ and $i+1$ in the second case.

The following theorem (see \cite[Proposition 3.8.1]{Sagan}) provides
an important  characterization of the dual Knuth relation.

\begin{proposition}
\label{dual.Knuth}
 Let $S, T \in SYT_n$. Then
$ S \nuth T ~~\text{ if and only if }~~ \sh(S)=\sh(T).$
\end{proposition}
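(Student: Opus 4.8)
The plan is to establish the two implications separately; the forward one is immediate from the definition of the dual Knuth action, and the substance lies in the converse, which I would reduce to a statement about permutations via the Robinson--Schensted correspondence.

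For the forward implication, observe that a single elementary move of the dual Knuth action on a tableau $S\in SYT_n$ --- in either of the two cases described before the statement --- merely interchanges the labels of two cells of $S$ while leaving the underlying set of cells fixed, and by construction its output is again a standard Young tableau. Hence an elementary move preserves the shape, and chasing the chain of such moves that witnesses $S\nuth T$ yields $\sh(S)=\sh(T)$ by an immediate induction on the length of the chain.

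For the converse, suppose $\sh(S)=\sh(T)=\lambda$ and fix once and for all an auxiliary tableau $V\in SYT_\lambda$. By bijectivity of $RSK$ there are unique $\sigma,\tau\in S_n$ with $(I(\sigma),R(\sigma))=(S,V)$ and $(I(\tau),R(\tau))=(T,V)$. Invoking the symmetry of $RSK$, namely that $\sigma\mapsto(P,Q)$ forces $\sigma^{-1}\mapsto(Q,P)$, we obtain $I(\sigma^{-1})=V=I(\tau^{-1})$, so Knuth's theorem gives $\sigma^{-1}\Knuth\tau^{-1}$, which is precisely $\sigma\nuth\tau$ in $S_n$. Pick a chain $\sigma=\sigma_1,\sigma_2,\dots,\sigma_k=\tau$ of single dual Knuth relations connecting $\sigma$ to $\tau$. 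Since left descent sets are constant on Knuth classes (Lemma~\ref{descent.lemma0}), each step $\sigma_j\nuth\sigma_{j+1}$ descends to the corresponding elementary move of the dual Knuth action on $SYT_n$; applying $I(\cdot)$ along the chain therefore connects $I(\sigma_1)=S$ to $I(\sigma_k)=T$ within $(SYT_n,\nuth)$, i.e.\ $S\nuth T$.

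The one point that genuinely needs care --- and which I regard as the crux --- is the step just used: that an elementary dual Knuth relation between $\sigma$ and $\sigma'$ in $S_n$ induces on the insertion tableaux exactly the cell-region move described in the paragraph preceding the statement, so that the abstractly defined action on $SYT_n$ agrees with the one carried over through $RSK$. This is the Knuth--Sch\"utzenberger compatibility between Knuth moves on a word and dual Knuth moves on its recording tableau; establishing it from scratch amounts to tracking the positions of the cells carrying $i$, $i+1$, $i+2$ through the insertion algorithm, a finite but somewhat fussy case analysis. Everything else is routine bookkeeping with the standard properties of $RSK$ already invoked in the paper.
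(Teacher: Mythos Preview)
The paper does not supply its own proof of this proposition --- it simply cites \cite[Proposition~3.8.1]{Sagan} --- so there is no in-paper argument to compare against. Your outline is correct and is essentially the standard route via $RSK$: the forward direction is immediate from the description of the tableau action, and for the converse your choice of a common recording tableau $V$ together with the $RSK$ symmetry $I(\sigma^{-1})=R(\sigma)$ is exactly the right device, yielding $\sigma\nuth\tau$ in $S_n$ and hence a chain of elementary moves to push through $I(\cdot)$.

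You have also correctly isolated the one substantive step: that an elementary dual Knuth move on permutations, transported through the insertion map, produces precisely the cell-swap described in the paragraph preceding the proposition. The paper itself asserts this compatibility without justification (in the sentence ``the dual Knuth relation defines an action on the standard Young tableaux''), so flagging it as the crux is apt. A partial shortcut is to note that $\sigma|_{[1,j]}$ and $\sigma'|_{[1,j]}$ coincide for $j\le i-1$ and are themselves related by the same dual Knuth move for $j\ge i+2$, so the shapes $\sh(I(\sigma)_{[1,j]})$ agree for those $j$ and hence every label outside $\{i,i+1,i+2\}$ sits in the same cell of $I(\sigma)$ and $I(\sigma')$; but pinning down the placement of the three remaining labels from the descent pattern alone does not in general determine a unique filling, so one is still left with the bumping-path case analysis you describe. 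That is indeed where the work in Sagan's argument lies, and everything else in your proof is, as you say, routine bookkeeping.
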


\begin{definition}
Let $\{\alpha,\beta\}=\{i,i+1\}$ and
$SYT_n^{[\alpha,\beta]}:=\{ T \in SYT_n \mid
\alpha \in \Des(T), \beta \not \in \Des(T)\}.$
Then we have  {\it inner
translation map}
$$ \mathcal{V}_{[\alpha,\beta]}: SYT_n^{[\alpha,\beta]}\mapsto
SYT_n^{[\beta,\alpha]}$$ which send every tableau $T\in
SYT_n^{[\alpha,\beta]}$ to a tableau obtained as a result of the
action of the  single dual Knuth relation determined by the triple
$\{i, i+1, i+2\}$.
\end{definition}

The inner translation map is first introduced  by Vogan in ~\cite{Vogan} where he also shows that   Kazhdan-Lusztig order is preserved under this map.  For  geometric
order this result is  due to Melnikov ~\cite[Proposition
6.6]{Melnikov4}. On the other hand the example given below shows that the weak order does not
satisfy this property.

\begin{example}
$$\begin{array}{ccc} 1&2&\mathbf{4}\\\mathbf{3}&\mathbf{5}&6 \end{array}
\leq_{weak}
\begin{array}{ccc} 1&2&\mathbf{4}\\\mathbf{3}&6\\\mathbf{5} \end{array}
\hskip .1in \text{ but } \hskip .1in
\begin{array}{ccc} 1&2&\mathbf{3}\\\mathbf{4}&\mathbf{5}&6
\end{array} \not\leq_{weak}
\begin{array}{ccc} 1&2&\mathbf{5}\\\mathbf{3}&6\\\mathbf{4} \end{array}$$
where the latter pair is obtained from the former by applying a
single dual Knuth relation on the triple $\{3,4,5\}$.
\end{example}

A weaker version of the inner translation property can be defined in
the following manner:

\begin{definition}  For  $1 \leq k<n$ and $R\in SYT_k$ let $SYT_n^{R}:=\{ T  \in SYT_n \mid T_{[1,k]}=R\}$.
Then for $R,R'\in SYT_k$, having the same shape, we have    {\it inner tableau translation map}
$$ \mathcal{V}_{[R,R']}: SYT_n^{R}\mapsto
SYT_n^{R'}$$
which send every $T\in SYT_n^R$ to the tableau $T'$ obtained by replacing $R$ with $R'$.
\end{definition}

As a consequence of  Proposition~\ref{dual.Knuth}, one can generate
$T'$ by  a  sequence of dual Knuth relations applied on the
subtableau $R$ of $T$. Therefore   if a partial order is preserved
under inner translation map then it is also preserved under inner
tableau translation map. Hence  Kazhdan-Lusztig and geometric orders
have this property. On the other hand it is still reasonable to ask
whether the weak order is preserved under the inner tableau
translation property.

\begin{conjecture}
\label{main.conjecture}  Let  $S\lessdot_{weak}T$  be a covering
relation in $ SYT_n^{R}$ and $R'$ be a tableau obtained by  applying
to $R$ a single  dual Knuth relation. Then
 $$\mathcal{V}_{[R,R']}(S)\lessdot_{weak}\mathcal{V}_{[R,R']}(T) ~~\text{in}~~ SYT_n^{R'}.$$
  In other words the weak order on standard Young tableau is preserved
under the inner tableau translation map.
\end{conjecture}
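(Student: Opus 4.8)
The strategy is to push the whole question into $S_n$ and argue with left inversion sets, exploiting the rigidity forced by the common inner tableau $R$.

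\emph{Reduction to a covering in $S_n$.} First I would check that $SYT_n^{R}$ is order convex in $(SYT_n,\leq_{weak})$: if $S\leq_{weak}X\leq_{weak}T$ with $S,T\in SYT_n^{R}$, then Lemma~\ref{inner.restriction.lemma} applied to the segment $[1,k]$ gives $R=\st(S_{[1,k]})\leq_{weak}\st(X_{[1,k]})\leq_{weak}\st(T_{[1,k]})=R$; since the cells of a standard tableau carrying the labels $\leq k$ already form the Young diagram $\sh(R)$, this forces $X_{[1,k]}=R$, i.e. $X\in SYT_n^{R}$. Hence a saturated $\leq_{weak}$-chain between two elements of $SYT_n^{R}$ stays inside $SYT_n^{R}$, so it suffices to treat a cover $S\lessdot_{weak}T$, and in fact it suffices to prove the weaker assertion $\mathcal{V}_{[R,R']}(S)\leq_{weak}\mathcal{V}_{[R,R']}(T)$: running this along a saturated chain, and using convexity together with the fact that $\mathcal{V}_{[R',R]}$ is again a single dual Knuth translation map inverting $\mathcal{V}_{[R,R']}$, upgrades $\leq_{weak}$ back to the covering relation. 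Finally, unwinding the definition of the weak order and refining to a saturated chain in $(S_n,\leq_{weak})$ produces $\sigma\in\kc_{S}$, $\tau\in\kc_{T}$ with $I(\sigma)=S$, $I(\tau)=T$ and $\tau=\sigma s_p$, $\len(\tau)=\len(\sigma)+1$; in particular $\Inv_L(\tau)=\Inv_L(\sigma)\sqcup\{(a,b)\}$ for a single extra pair $a<b$.

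\emph{Compatibility with RSK.} Let $\phi$ be the single dual Knuth move on permutations attached to the triple $\{i,i+1,i+2\}$ that turns $R$ into $R'$, so $i+2\leq k$. Since $\Des(S)$ and $\Des(T)$ agree with $\Des(R)$ below $k$, the move is active on both $S$ and $T$, and combining Lemma~\ref{j-d-t-initial-final}, the identity $R(w)=I(w^{-1})$ and Proposition~\ref{dual.Knuth} one sees that $\phi$ disturbs only cells lying inside $\sh(R)$, so that $I(\phi(\rho))=\mathcal{V}_{[R,R']}(I(\rho))$ and $R(\phi(\rho))=R(\rho)$ for every $\rho\in\kc_S\cup\kc_T$. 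Thus it would be enough to find $\sigma\in\kc_S$, $\tau\in\kc_T$ as above with $\phi(\sigma)\leq_{weak}\phi(\tau)$.

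\emph{The crux: choosing the right lift.} One is tempted to take any lift $\sigma\lessdot_{weak}\tau$ from the first step and apply $\phi$, but this can fail: realized on a permutation, $\phi$ transposes two of the entries $i,i+1,i+2$, and depending on which representative of the Knuth class one picks it may transpose a different pair, so that $\phi$ and $s_p$ do not interact cleanly and $\Inv_L(\phi(\sigma))\not\subseteq\Inv_L(\phi(\tau))$; already a small example in $S_4$ with $R$ the $(2,1)$ inner tableau exhibits this. The remedy is to choose the lift $\sigma\lessdot_{weak}\tau$ so that $i,i+1,i+2$ occupy the same positions in $\sigma$ as in $\tau$ — equivalently, so that $s_p$ and the entry-transposition realizing $\phi$ commute; then $\phi(\tau)=\phi(\sigma)s_p$ with $\len(\phi(\tau))=\len(\phi(\sigma))+1$, whence $\mathcal{V}_{[R,R']}(S)=I(\phi(\sigma))\leq_{weak}I(\phi(\tau))=\mathcal{V}_{[R,R']}(T)$. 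The descent constraint rules out the genuinely dangerous overlaps of $\{a,b\}$ with $\{i,i+1,i+2\}$ (those in which $\{a,b\}$ is a consecutive pair inside $\{i,i+1,i+2\}$), and the transpose and evacuation automorphisms of Proposition~\ref{order.preserving.maps}, which carry dual Knuth classes to dual Knuth classes, should roughly halve the remaining case analysis.

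The main obstacle is precisely the existence of such a good lift, uniformly in $\sh(R)$: one must show that within $\kc_S$ one can always slide the entries $i,i+1,i+2$ (using Knuth moves, which fix $I(\sigma)=S$) into the configuration they have in some lift of $T$, while controlling the single extra inversion $(a,b)$ along the way. I expect this to be exactly where the argument becomes shape-dependent, which is presumably why the conjecture is proved here only when $\sh(R)$ is a hook, two rows, or two columns — shapes for which $\kc_S$ is small and structured enough that the good lift can be produced by direct inspection.
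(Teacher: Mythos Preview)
Your outline is essentially the paper's own approach, and you are right that it does not complete to a proof: the statement is a conjecture, and Section~4 of the paper proves it only for the hook, two-row, and two-column shapes.  The reductions you make---convexity of $SYT_n^{R}$ via Lemma~\ref{inner.restriction.lemma}, upgrading $\leq_{weak}$ to a cover by running the inverse translation $\mathcal{V}_{[R',R]}$, and passing to a cover $\sigma\lessdot_{weak}\tau=\sigma s_p$ in $S_n$---are exactly those used in Lemma~\ref{conjecture.lemma1} and the subsequent propositions.  Your descent argument excluding $\{a,b\}=\{i,i+1\}$ or $\{i+1,i+2\}$ is correct and is implicit in the paper, which in each special case immediately isolates $\{a_j,a_{j+1}\}=\{k-2,k\}$ as the only nontrivial situation.

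One small sharpening: the paper does not insist, as you do, on a lift in which $i,i+1,i+2$ occupy literally the same positions in $\sigma$ and $\tau$.  In the critical case it instead applies Knuth moves to the row word of $I(a_1\ldots a_{j-1})$ to produce new representatives in $\kc_S$ and $\kc_T$ which, \emph{after} the dual Knuth move, differ by a single adjacent transposition at a possibly different position (involving the first entry $b_{r+1}>k$ to the right of $k-1$ in that row word).  This is a slightly more flexible version of your ``good lift'' idea, and it is precisely this row-word manipulation that is carried out case by case for the special shapes; your diagnosis that this step is where the shape dependence enters is accurate.
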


\begin{remark} Recall that  any tableau $R' \in SYT_r$  with the same shape as $R$,
 can be obtained by applying to $R$ a sequence of dual Knuth relation by Proposition\ref{dual.Knuth}.
 Therefore one can generalize the conjecture for any tableau $R$ and $R'$  having the same shape.

\end{remark}

As it is stated earlier this conjecture is checked by computer programing up to $n=9$. In the last section we also show that for a specific case the conjecture is true.

\section{Applications of the conjecture}

\subsection{Well-definedness of the weak order}

 By assuming Conjecture~\ref{main.conjecture}, we first prove the following
 result.

 \begin{theorem} The weak order on $SYT_n$ is well defined.
\end{theorem}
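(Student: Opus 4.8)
The plan is to turn the definition of the weak order on $SYT_n$ into a genuine partial order by establishing antisymmetry; reflexivity and transitivity are built in, since $\leq_{weak}$ on $SYT_n$ is defined as the transitive closure of a reflexive relation. So the content is: if $S\leq_{weak}T$ and $T\leq_{weak}S$ in $SYT_n$, then $S=T$. The danger, and the reason this is nontrivial, is precisely the transitive closure in Definition~\ref{weak-order-def}: a chain $S=U_0\leq_{weak}U_1\leq_{weak}\cdots\leq_{weak}U_m=S$ need not come from a single pair of Knuth-class representatives comparable in $(S_n,\leq_{weak})$, so one cannot just quote antisymmetry of the weak Bruhat order on $S_n$. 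I would instead extract a numerical invariant that is weakly monotone along every step of such a chain and strictly monotone unless the step is trivial.

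First I would record the monotone invariants already available in the excerpt. By Lemma~\ref{shape.lemma}, $S\leq_{weak}T$ forces $\sh(T)\leq_{dom}\sh(S)$; running this around the cycle shows all the $U_i$ have the same shape $\lambda$. By Lemma~\ref{descent.lemma} (applicable since $\leq_{weak}$ is itself an order stronger-or-equal to the weak order and restricts to segments by Lemma~\ref{inner.restriction.lemma}), $S\leq_{weak}T$ forces $\Des(S)\subseteq\Des(T)$; around the cycle this gives $\Des(U_0)=\Des(U_1)=\cdots=\Des(U_m)$. So every step $U_{i}\leq_{weak}U_{i+1}$ in the cycle is a comparison between two tableaux of the same shape and the same descent set. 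Now I would argue by induction on $n$, using Conjecture~\ref{main.conjecture} to peel off the inner tableau. Fix the common shape $\lambda$ and let $R_i:=(U_i)_{[2,n]}$, the outer tableau obtained by deleting the cell containing $1$ (equivalently, the restriction to $\{2,\dots,n\}$, standardized). Lemma~\ref{inner.restriction.lemma} gives $R_0\leq_{weak}R_1\leq_{weak}\cdots\leq_{weak}R_m=R_0$ in $SYT_{n-1}$, so by the inductive hypothesis all the $R_i$ are equal to a single tableau $R$. Thus the entire cycle lives in the fibre $SYT_n^{\,?}$ over a fixed outer tableau — here I would use the outer-tableau analogue of $SYT_n^R$, or equivalently apply the transpose/evacuation symmetries of Proposition~\ref{order.preserving.maps} to reduce to the inner-tableau fibre to which Conjecture~\ref{main.conjecture} literally applies. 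Within such a fibre, all tableaux are obtained from one another by dual Knuth moves on the deleted region, which move only a single entry; combined with the fact (Proposition~\ref{dual.Knuth}) that the fibre is exactly one dual-Knuth class, the fibre is small and its internal weak order can be analyzed directly: one checks that on this class $\leq_{weak}$ coincides with a genuine order (e.g. the one pulled back from $\Inv_L$ of a canonical representative), forcing the cycle to collapse.

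An alternative and cleaner route, which I would try first, avoids the induction: take $\sigma_i\in\kc_{U_i}$ and $\tau_{i+1}\in\kc_{U_{i+1}}$ with $\sigma_i\leq_{weak}\tau_{i+1}$ in $S_n$ realizing each step. Since $\tau_{i+1}$ and $\sigma_{i+1}$ lie in the same Knuth class $\kc_{U_{i+1}}$, they differ by Knuth moves, which leave $\Inv_L$ unchanged on the level of descent sets but not of full inversion sets; however, Greene's theorem bounds are shape invariants and $|\Inv_L|$ within a fixed Knuth class is not constant, so a direct ``$|\Inv_L|$ increases'' argument fails — and this is exactly the subtlety that makes the problem require the conjecture. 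So I would fall back to the inductive argument above. The main obstacle, and the step I expect to absorb almost all the work, is the base of the peeling argument: showing that on a single dual-Knuth class (a fixed fibre) the relation $\leq_{weak}$ inherited from $SYT_n$ is antisymmetric. This is where Conjecture~\ref{main.conjecture} is indispensable: it says the covering relations of $\leq_{weak}$ are compatible with the inner tableau translation maps, so one can transport the cycle into the Knuth class $\kc_R$ in $S_k$ (thinking of the fibre as a translate of $SYT_k$'s internal weak order on one class) and there invoke antisymmetry of the genuine weak Bruhat order on $S_k$. Making the bookkeeping of ``covering relation in the fibre $\Leftrightarrow$ dual-Knuth-compatible covering, hence controlled by $\mathcal{V}_{[R,R']}$'' precise, and handling the passage from covering relations back to the full transitive closure, is the delicate part; everything else is assembling Lemmas~\ref{inner.restriction.lemma}, \ref{descent.lemma}, \ref{shape.lemma} and Proposition~\ref{order.preserving.maps}.
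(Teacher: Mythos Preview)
Your inductive strategy is genuinely different from the paper's, and --- once you clean up the final step --- it is actually \emph{simpler} and does not require Conjecture~\ref{main.conjecture} at all. The paper argues as follows: assuming $S\leq_{weak}T$ with $S\neq T$, it uses Lemma~\ref{descent.lemma} to dispose of the case $\Des(S)\subsetneq\Des(T)$; in the remaining case $\Des(S)=\Des(T)$ it locates the largest $k$ with $S_{[1,k]}=T_{[1,k]}=:R$, observes via Lemmas~\ref{inner.restriction.lemma} and~\ref{shape.lemma} that the $(k{+}1)$-cell of $T$ lies strictly south-west of that of $S$, picks a corner cell $C$ of $R$ lying between them, and then invokes the conjecture to replace $R$ by some $R'$ having $k$ in $C$. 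In the translated pair $(S',T')$ one now has $k\in\Des(T')\setminus\Des(S')$, so $T'\not\leq_{weak}S'$, and a second application of the conjecture transports this back to $T\not\leq_{weak}S$.

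Your route --- combine shape monotonicity (Lemma~\ref{shape.lemma}) with segment restriction (Lemma~\ref{inner.restriction.lemma}) and induct on $n$ --- is sound, but you stop one observation short of finishing and then import the conjecture unnecessarily. After you know that every $U_i$ in the putative cycle has the same shape $\lambda$ and, by the inductive hypothesis applied to the restricted cycle, the same restriction to a fixed segment, the ``fibre'' you worry about is a \emph{singleton}. This is most transparent if you restrict to $[1,n-1]$ rather than $[2,n]$: the tableau $U_{[1,n-1]}$ is literally $U$ with the cell containing $n$ erased (no jeu de taquin needed), so knowing $\sh(U)=\lambda$ together with $U_{[1,n-1]}$ pins down the unique cell $\lambda\setminus\sh(U_{[1,n-1]})$ where $n$ must sit, and hence determines $U$ completely. (Your choice $[2,n]$ also works, since a single jeu de taquin slide from a fixed outer corner of $\lambda$ is reversible, but the bookkeeping is heavier.) Consequently your dual-Knuth analysis of the fibre, the appeal to evacuation symmetries, and the sentence ``This is where Conjecture~\ref{main.conjecture} is indispensable'' are all superfluous; the conjecture plays no role. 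In short: your outline, pruned of its last paragraph, already yields a self-contained proof of antisymmetry independent of the conjecture --- which is more than the paper's own argument delivers.
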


\begin{proof} It is enough to show that if $S\leq_{weak} T$ and $S\not = T$ then   $T\not\leq_{weak}S$.
By   Lemma~\ref{descent.lemma}  we know that $S\leq_{weak} T$
implies $\Des(S)\subset\Des(T)$ and  if $\Des(T)-\Des(S)\not =
\emptyset$ then clearly $T\not\leq_{weak}S$.  Now we suppose that
 $\Des(T)=\Des(S)$. Let $k$ be the smallest
integer satisfying
$$S_{[1,k]}=T_{[1,k]}.$$
So $k<n$ and   $S_{[1,k+1]}$ and $T_{[1,k+1]}$ differ
only by the position of the corner  cells labeled by $k+1$.  On the other hand   by
Lemma~\ref{inner.restriction.lemma} have
$$S_{[1,k+1]}\lneq_{weak}T_{[1,k+1]}$$ and Lemma~\ref{shape.lemma}
together with the fact that $\sh(T_{[1,k]}) = \sh(S_{[1,k]})$ gives
\begin{equation}\label{welldefined.eq} \sh(T_{[1,k+1]}) \lneq_{dom} \sh(S_{[1,k+1]}).
\end{equation}

Let $A=\{(i,j)\}$ and $B=\{(i',j')\}$  denote  the cells  labeled by
$k+1$ in $S_{[1,k+1]}$ and $T_{[1,k+1]}$  respectively. Then
\eqref{welldefined.eq} implies that
$$i<i' ~\text{ and } ~j>j'$$  i.e., the corner cell $B$ lies  below  the corner cell $A$ and  therefore there exists a corner cell
  $C=(i'',j'')$ of $S_{[1,k]}=T_{[1,k]}$ which satisfies
$$ i \leq i''< i' ~\text{ and }~ j >j'\geq j''.
$$

Let $R$ denote  $S_{[1,k]}=T_{[1,k]}$ and let $R'$ be another
tableau in $ SYT_k$  such that $\sh(R)=\sh(R')$ and  the corner cell
$C$ of $R'$ is labeled by $k$.    Denote also by $S'$ and $T'$ the
tableaux obtained by replacing $R$ with  $R'$ in $S$ and $T$
respectively. Now by  Conjecture~\ref{main.conjecture} we have
$S'\lessdot_{weak}T'$ and moreover
$$k \in \Des(T')-\Des(S').
 $$
The last argument shows that $T'\not\leq_{weak}S'$ and therefore  by
Conjecture~\ref{main.conjecture} $T\not\leq_{weak}S$.
\end{proof}

\subsection{Poirier-Reutenauer Hopf algebra on
 $\mathbb{Z}SYT= \oplus_{n \geq 0}\mathbb{Z}SYT_{n}$}

Following the work of Malvenuto and Reutenauer on permutations \cite{Malvenuto-Reutenauer}, Poirier-Reutenauer construct two graded Hopf algebra structures on $\mathbb{Z}$ module of
all plactic classes $\{PC_{T}\}_{T \in SYT}$,   where $PC_{T}=\sum_{\footnotesize P(u)=T}u$.  The
product structure of the one that concerns us here is given by
\begin{equation} \label{P-R-multiplication}
PC_{T} \ast PC_{T'} =
        \sum_{\substack{P(u)=T \\  P(w)=T'}}
    \shf(u, \overline{w })
\end{equation}
where $\overline{w}$ is obtained by increasing the indices of $w$ by
the length of $u$ and $\shf$ denotes the shuffle product. Then the bijection sending each plactic class to its defining
tableau gives us a Hopf algebra structure on the $\mathbb{Z}$ module
of all standard Young tableaux, $\mathbb{Z}SYT = \oplus_{n \geq 0}
\mathbb{Z}SYT_{n}$.

In \cite{Poirier-Reutenauer} Poirier and Reutenauer explain this
product using jeu de taquin slides. Following an  analogous result
of Loday and Ronco \cite[Thm. 4.1]{Loday-Ronco} on permutations, the
author  shows the following result in \cite{Taskin}: For  $S\in
SYT_k$, $T\in SYT_l$ where $k+l=n$, let  $\overline{T}$ the tableau
which is obtained by increasing the indices of $T$ by $k$.  Denote
by $S/T$
  the tableau whose columns  are obtained by concatenating the columns  of
$\overline{T}$ over $S$ below and by  $S\backslash T$ the tableau
whose  rows are obtained by concatenating the rows of $\overline{T}$
over $S$ from the right. Then by \cite[Thm. 4.2]{Taskin}
$$ S \ast T = \sum_{\substack{R \in SYT_n:\\
                 S\backslash T \leq_{weak} R \leq_{weak} S/T}} R$$
Namely the product structure can be read on the weak order poset of standard Young tableaux.

\begin{example}
Let $ S= \scriptstyle{\begin{array}{cc} 1&2 \\ 3 & \end{array}}$ and
  $T =\scriptstyle{\begin{array}{c} 1 \\ 2 \end{array}}$.
Then  $S\backslash T=\scriptstyle{\begin{array}{ccc} 1&2&4 \\ 3 & 5
\end{array}}$, ~ $S/ T=\scriptstyle{\begin{array}{cc} 1&2 \\ 3& \\ 4&
\\ 5
\end{array}}$.  Then
$$\begin{aligned}
PC_{\footnotesize{\begin{array}{ll} 1&2 \\ 3&
\end{array}}}~\ast~
    PC_{\footnotesize{\begin{array}{l} 1\\ 2  \end{array}}}
&=\shf(312,54) + \shf(132,54)\\
&= PC_{\footnotesize{\begin{array}{lll} 1&2&4\\3&5
\end{array}}}+
 PC_{\footnotesize{\begin{array}{lll} 1&2&4\\3\\ 5 \end{array}}}+
 PC_{\footnotesize{\begin{array}{ll} 1&2\\3&4\\ 5 \end{array}}}+
 PC_{\footnotesize{\begin{array}{ll} 1&2\\3\\4\\5 \end{array}}}.
\end{aligned}$$
On the other hand one can check from
 Figure~\ref{figure1}  that the
product $S \ast T$ is equal to the sum of all tableaux in the interval
$[S\backslash T, S/T]$.
\end{example}

By using the facts  that $(S\backslash T)^{evac}=T^{evac}\backslash S^{evac}$  and  $(S/T)^{evac}=T^{evac}/S^{evac}$ and  Proposition~\ref{order.preserving.maps},  one can easily deduce the following corallary to  Conjecture~\ref{main.conjecture}.

\begin{corollary} Let $S,S',T,T'$ be standard Young tableaux satisfying
$$\sh(S)=\sh(S') ~\text{  and }~  \sh(T)=\sh(T').$$  Then the intervals of the weak order $[S\backslash T, S/T]$ and  $[S'\backslash T', S'/T']$ are isomorhic. Equivalently,  the shuffle product  $S \ast T$ is determined by the shapes of the tableaux  rather than the tableaux itself.
\end{corollary}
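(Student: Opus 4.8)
The plan is to reduce the corollary to a single application of Conjecture~\ref{main.conjecture} for each of the two bounds of the interval, exploiting the evacuation symmetry to convert the "outer" tableau translation into an "inner" one. First I would observe that replacing $T$ by $T'$ inside $S/T$ and $S\backslash T$ is an outer (rather than inner) tableau modification, so it is not directly covered by Conjecture~\ref{main.conjecture}; this is where the identities $(S\backslash T)^{evac}=T^{evac}\backslash S^{evac}$ and $(S/T)^{evac}=T^{evac}/S^{evac}$ enter. Applying the evacuation anti-involution (which by Proposition~\ref{order.preserving.maps}(1) is an automorphism of $(SYT_n,\leq_{weak})$, since $evac$ is an involution), the bottom element $S\backslash T$ of the interval becomes $T^{evac}\backslash S^{evac}$, whose initial segment $(T^{evac}\backslash S^{evac})_{[1,l]}$ is exactly $T^{evac}$; likewise for $S/T$. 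Thus the substitution $T\rightsquigarrow T'$ on the interval $[S\backslash T,\,S/T]$ is, after conjugating by $evac$, precisely the inner tableau translation map $\mathcal V_{[T^{evac},(T')^{evac}]}$ applied to the interval $[T^{evac}\backslash S^{evac},\,T^{evac}/S^{evac}]$.

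Next I would carry out the following steps in order. (i) Verify the two evacuation identities for the $/$ and $\backslash$ operations; these should follow from the column/row-concatenation descriptions together with the permutation-level formula $\tau^{evac}=(n+1-\tau_n)\cdots(n+1-\tau_1)$ given in the excerpt, reading off the recording-tableau behavior. (ii) Use Proposition~\ref{order.preserving.maps}(1) to transport the whole interval $[S\backslash T, S/T]$ isomorphically to $[T^{evac}\backslash S^{evac},\,T^{evac}/S^{evac}]$. (iii) Apply the remark following Conjecture~\ref{main.conjecture} (the version for arbitrary same-shape $R,R'$, obtained by iterating single dual Knuth moves): since $\sh(T^{evac})=\sh((T')^{evac})$, the map $\mathcal V_{[T^{evac},(T')^{evac}]}$ is an order isomorphism from $SYT_n^{T^{evac}}$ onto $SYT_n^{(T')^{evac}}$, hence carries the interval $[T^{evac}\backslash S^{evac},\,T^{evac}/S^{evac}]$ isomorphically onto $[(T')^{evac}\backslash S^{evac},\,(T')^{evac}/S^{evac}]$. (iv) Transport back by $evac$ to get an isomorphism $[S\backslash T,\,S/T]\cong[S\backslash T',\,S/T']$.

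To handle the change $S\rightsquigarrow S'$ I would repeat the same argument without evacuation: here $(S\backslash T)_{[1,k]}=S$ and $(S/T)_{[1,k]}=S$ directly, so $\mathcal V_{[S,S']}$ (again in the same-shape form from the remark) restricts to an order isomorphism of $[S\backslash T',\,S/T']$ onto $[S'\backslash T',\,S'/T']$. Composing the two isomorphisms yields $[S\backslash T,\,S/T]\cong[S'\backslash T',\,S'/T']$, and since by \cite[Thm.~4.2]{Taskin} the product $S\ast T$ is the formal sum of the elements of $[S\backslash T,\,S/T]$, the two products agree up to the shape-preserving relabeling, giving the equivalent statement.

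I expect the main obstacle to be step (i): carefully checking that $(S\backslash T)^{evac}=T^{evac}\backslash S^{evac}$ and $(S/T)^{evac}=T^{evac}/S^{evac}$. Intuitively evacuation reverses the reading word and complements entries, which swaps the roles of $S$ and $T$ and turns "concatenate rows of $\overline T$ to the right of $S$" into "concatenate rows of $\overline{S^{evac}}$ to the right of $T^{evac}$"; making this precise requires either a direct jeu-de-taquin/evacuation computation on skew shapes or an appeal to the standard compatibility of evacuation with the Poirier–Reutenauer product, but it is purely a property of $RSK$ and $evac$ and involves no new input beyond what is cited. A secondary point to be careful about is that the same-shape version of Conjecture~\ref{main.conjecture} gives an honest order \emph{isomorphism} of the full fibers $SYT_n^{R}\cong SYT_n^{R'}$ (not merely preservation of covering relations in one direction); this is exactly the content of iterating the conjecture in both directions, which is what the remark asserts, so invoking it is legitimate.
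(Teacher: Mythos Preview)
Your proposal is correct and follows exactly the route the paper indicates: the paper gives only the one-line hint ``using the facts that $(S\backslash T)^{evac}=T^{evac}\backslash S^{evac}$ and $(S/T)^{evac}=T^{evac}/S^{evac}$ and Proposition~\ref{order.preserving.maps}, one can easily deduce the following corollary to Conjecture~\ref{main.conjecture}'', and your steps (i)--(iv) are precisely the intended unpacking of that hint. The only point you might make explicit is that every element of the interval $[S\backslash T,\,S/T]$ actually lies in $SYT_n^{S}$ (equivalently, after evacuation, in $SYT_n^{T^{evac}}$), so that the inner tableau translation map is defined on the whole interval; this follows from Lemma~\ref{inner.restriction.lemma} applied to the two bounds, or directly from the shuffle description of the product.
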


\section{The cases where the conjecture holds}

\begin{lemma}
\label{conjecture.lemma1} Suppose that  $S\leq_{weak}T$  is a
covering relation in $SYT_n^{R}$ and $R,R'\in SYT_k$ has the same
shape. If $\st(S_{[k+1,n]})\lneq_{weak} \st(T_{[k+1,n]})$ then
$\mathcal{V}_{[R,R']}(S)\lessdot_{weak}\mathcal{V}_{[R,R']}(T)$.
\end{lemma}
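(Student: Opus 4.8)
The plan is to reduce the statement about tableaux back to a statement about permutations, where the weak (Bruhat) order is better understood via inversion sets. First I would unpack what it means for $S \lessdot_{weak} T$ to be a covering relation in $SYT_n^R$: we have $\sigma \in \mathcal{Y}_S$ and $\tau \in \mathcal{Y}_T$ with $\sigma \leq_{weak} \tau$ in $S_n$, and by definition of $SYT_n^R$ both $S$ and $T$ have the same inner tableau $R = S_{[1,k]} = T_{[1,k]}$. The key tool is that a permutation $\sigma \in \mathcal{Y}_S$ with $S_{[1,k]} = R$ can be chosen so that its restriction $\sigma_{[1,k]}$ lies in $\mathcal{Y}_R$; in fact one can arrange $\sigma$ to be a "shuffle'' of a word $\rho$ with $I(\rho) = R$ on the letters $\{1,\dots,k\}$ and a word on the letters $\{k+1,\dots,n\}$ whose standardization has insertion tableau $\st(S_{[k+1,n]})$. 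The inner tableau translation $\mathcal{V}_{[R,R']}$ then corresponds, on the permutation side, to replacing the subword $\rho$ by a Knuth-equivalent word $\rho'$ with $I(\rho') = R'$, keeping the relative positions of the large letters $\{k+1,\dots,n\}$ and their values fixed.

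Next I would exploit the hypothesis $\st(S_{[k+1,n]}) \lneq_{weak} \st(T_{[k+1,n]})$, which is strict. The point is that $\Inv_L(\sigma)$ decomposes into three pieces: inversions among the small letters $[1,k]$, inversions among the large letters $[k+1,n]$, and "mixed'' inversions between a small and a large letter. For the covering relation $\sigma \lessdot_{weak} \tau$ exactly one inversion is added. Since $\sigma$ and $\tau$ both have inner tableau $R$, the small–small inversions are governed by $R$ and can be taken equal; so the single added inversion is either mixed or large–large. Because the large–large part is strictly increasing ($\st(S_{[k+1,n]}) \lneq_{weak} \st(T_{[k+1,n]})$), I would argue that the added inversion is in fact the large–large one (a mixed or small–small change would not increase the large restriction), and the mixed inversions are determined by where the large letters sit relative to the small-letter positions — data that is unchanged by $\mathcal{V}_{[R,R']}$. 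Thus after applying $\mathcal{V}_{[R,R']}$ one obtains $\sigma' \in \mathcal{Y}_{\mathcal{V}_{[R,R']}(S)}$ and $\tau' \in \mathcal{Y}_{\mathcal{V}_{[R,R']}(T)}$ with $\Inv_L(\sigma') \subseteq \Inv_L(\tau')$ differing by exactly one inversion, giving the covering relation $\mathcal{V}_{[R,R']}(S) \lessdot_{weak} \mathcal{V}_{[R,R']}(T)$.

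The main obstacle I anticipate is the bookkeeping in choosing the permutation representatives compatibly: one must produce, simultaneously for $S$ and for $T$, permutations $\sigma, \tau$ realizing the covering relation such that (a) they restrict on $[1,k]$ to the \emph{same} word $\rho \in \mathcal{Y}_R$ and (b) the positions and standardizations of the large-letter subwords realize the weak-order relation on $\st(S_{[k+1,n]}) \leq_{weak} \st(T_{[k+1,n]})$. This amounts to a simultaneous "shuffle-compatible'' choice of representatives, and one needs the interplay of Lemma~\ref{j-d-t-initial-final} and Lemma~\ref{inner.restriction.lemma} together with the characterization of Knuth classes via jeu-de-taquin to know such a choice exists. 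Once the representatives are in hand, replacing $\rho$ by $\rho'$ is a sequence of dual Knuth moves on disjoint letter ranges, which commute with the large-letter structure, so the inversion-set comparison goes through; verifying that exactly one inversion is added (so that we retain a \emph{covering} relation rather than just $\leq_{weak}$) is the remaining delicate point, handled by the strictness hypothesis.
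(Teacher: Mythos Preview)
Your strategy is the paper's strategy, but you have made the middle much harder than it is and left the end unjustified. First, a slip: ``a Knuth-equivalent word $\rho'$ with $I(\rho')=R'$'' is self-contradictory, since Knuth equivalence preserves insertion tableaux; you mean dual-Knuth equivalent, as you say correctly later. Second, the ``main obstacle'' you anticipate---producing $\sigma,\tau$ that restrict to the \emph{same} word on $[1,k]$---is no obstacle at all. Because $S\lessdot_{weak}T$ is a covering, one may take $\sigma\in\mathcal{Y}_S$, $\tau\in\mathcal{Y}_T$ differing by a single adjacent transposition, say $\tau=\sigma s_j$ with $a_j<a_{j+1}$. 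By Lemma~\ref{j-d-t-initial-final} the strict hypothesis $\st(S_{[k+1,n]})\neq\st(T_{[k+1,n]})$ forces $\sigma_{[k+1,n]}\neq\tau_{[k+1,n]}$, and the only way a single adjacent swap can change the subword on $[k+1,n]$ is if \emph{both} $a_j,a_{j+1}>k$. Hence $\sigma_{[1,k]}=\tau_{[1,k]}$ automatically; no shuffle construction or simultaneous choice is needed. Applying the dual Knuth move on $\{i,i+1,i+2\}\subset[1,k]$ then acts identically on $\sigma$ and $\tau$ away from positions $j,j+1$, giving $\sigma'\lessdot\tau'$ in $S_n$ with $I(\sigma')=\mathcal{V}_{[R,R']}(S)$ and $I(\tau')=\mathcal{V}_{[R,R']}(T)$.

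The genuine gap is your last step. Knowing that $\sigma'\lessdot\tau'$ adds exactly one inversion gives only $\mathcal{V}_{[R,R']}(S)\leq_{weak}\mathcal{V}_{[R,R']}(T)$ at the tableau level; a permutation-level covering does \emph{not} in general force a tableau-level covering, so ``handled by the strictness hypothesis'' is not an argument. The paper instead uses the inverse translation: if some $Q\in SYT_n^{R'}$ sat strictly between $\mathcal{V}_{[R,R']}(S)$ and $\mathcal{V}_{[R,R']}(T)$, then applying $\mathcal{V}_{[R',R]}$ would produce a tableau strictly between $S$ and $T$ in $SYT_n^{R}$, contradicting $S\lessdot_{weak}T$.
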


\begin{proof}It is enough to consider the case when  $R$ and $R'$ differ by only one  dual Knuth relation determined by the triples $\{i,i+1,i+2\}$ for some
 $i\leq k-2$. Since $ S<_{weak}T$ is a covering relation, there exist $\sigma \in
\kc_S$ and $\tau \in \kc_T$ such that for  some $i<n$,
 $$ \begin{aligned}
 \sigma=a_1\ldots a_j a_{j+1}\ldots a_n  &\leq  a_1\ldots a_{j+1} a_{j}\ldots a_n=\tau, \text{ where } a_j< a_{j+1}
\end{aligned}
 $$
 i.e., $\sigma<\tau$ is also
covering relation the right weak order on $S_n$.  On the other hand by Lemma~\ref{inner.restriction.lemma} we have
 $$
 I(\sigma_{[k+1,n]}) =S_{[k+1,n]} \text{ and } T_{[k+1,n]}=I(\tau_{[k+1,n]})
 $$
Furthermore  the assumption  $S_{[k+1,n]} \lneq_{weak} T_{[k+1,n]}$  yields that  $\sigma_{[k+1,n]}\lneq\tau_{[k+1,n]}$ and $\sigma_{[1,k]}=\tau_{[1,k]}$.

 Now applying  the dual Knuth relation determined by the triple $\{i,i+1,i+2\}$  on $\sigma$
and $\tau$ gives two new permutations say $\sigma'$ and $\tau'$
such that $\sigma'\lessdot \tau'$ in the right weak Bruhat order and
therefore
 $$\mathcal{V}_{[R,R']}(S)= I(\sigma') ~\leq_{weak}~I(\tau') =\mathcal{V}_{[R,R']}(T).$$
Now if there exist a tableau $Q \in SYT_n^{R'}$ satisfying
$\mathcal{V}_{[R,R']}(S) \lneq_{weak}  Q \lneq_{weak}
\mathcal{V}_{[R,R']}(T)$ then  we have $$S \lneq_{weak}
\mathcal{V}_{[R',R]}(Q) \lneq_{weak} T$$ which is clearly a
contradiction.  Hence $\mathcal{V}_{[R,R']}(S) \lessdot_{weak}
\mathcal{V}_{[R,R']}(T)$.
\end{proof}

\begin{lemma}
\label{conjecture.lemma2} Suppose $$R=\tableau{{1}&{3}\\{2}}
~~\text{and}~~R'=\tableau{{1}&{2}\\{3}}.$$ Then $S\lessdot_{weak}T$
in $SYT_n^{R}$  if and only if
$\mathcal{V}_{[R,R']}(S)\lessdot_{weak}\mathcal{V}_{[R,R']}(T)$  in
$SYT_n^{R'}$.
\end{lemma}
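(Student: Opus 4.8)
We prove the two directions separately, and by left–right symmetry of the situation it suffices to prove one of them: assuming $S\lessdot_{weak}T$ in $SYT_n^R$, show $\mathcal V_{[R,R']}(S)\lessdot_{weak}\mathcal V_{[R,R']}(T)$ in $SYT_n^{R'}$. The reverse direction then follows by applying the forward direction to the pair $R'=\tableau{{1}&{2}\\{3}}$, $R=\tableau{{1}&{3}\\{2}}$, since $\mathcal V_{[R',R]}$ is the inverse of $\mathcal V_{[R,R']}$ and a covering relation maps to a covering relation in both directions once we know the map is order preserving with order-preserving inverse. So the heart of the matter is the forward implication, and the strategy is to reduce it to Lemma~\ref{conjecture.lemma1}: if we can show that whenever $S\lessdot_{weak}T$ in $SYT_n^{R}$ we necessarily have $\st(S_{[3,n]})\lneq_{weak}\st(T_{[3,n]})$, then Lemma~\ref{conjecture.lemma1} (applied with $k=2$) finishes the proof immediately.

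**Reducing to a statement about the segment $[3,n]$.** The key observation is that here $R$ has shape $(2,1)$ with only $n=3$ possible... no — $R\in SYT_2$? No, $R$ has $3$ cells, so $k=3$ and we look at the segment $[4,n]$ and $\st(S_{[1,3]})=R$. By Lemma~\ref{inner.restriction.lemma}, $S\leq_{weak}T$ forces $\st(S_{[4,n]})\leq_{weak}\st(T_{[4,n]})$, and the only way the hypothesis of Lemma~\ref{conjecture.lemma1} can fail is if $\st(S_{[4,n]})=\st(T_{[4,n]})$. So the plan is: \emph{argue by contradiction that $\st(S_{[4,n]})=\st(T_{[4,n]})$ is impossible when $S\lessdot_{weak}T$ is a covering relation with $S\ne T$ in $SYT_n^R$.} Suppose $\st(S_{[4,n]})=\st(T_{[4,n]})$. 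Pick $\sigma\in\kc_S$, $\tau\in\kc_T$ with $\sigma\lessdot\tau$ a covering in the right weak order on $S_n$, say $\tau=\sigma\cdot s_j$ with $\sigma_j<\sigma_{j+1}$ swapped. Because $S_{[1,3]}=T_{[1,3]}=R$ and $\st(S_{[4,n]})=\st(T_{[4,n]})$, both the restriction to $\{1,2,3\}$ and the restriction to $\{4,\dots,n\}$ of $\sigma$ and $\tau$ must have the same insertion tableau; this is a strong constraint on where the transposed pair $\{\sigma_j,\sigma_{j+1}\}$ can sit — it cannot be a pair both $\le 3$ nor a pair both $\ge 4$ unless that single Knuth-type move already fails to change one of the two insertion tableaux. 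I would use Lemma~\ref{j-d-t-initial-final} together with the Knuth-equivalence description of $RSK$ to pin down that $\{\sigma_j,\sigma_{j+1}\}=\{a,b\}$ with $a\le 3<b$, and then show that for $R$ of shape $(2,1)$ this mixed case actually does move the insertion tableau on the $[1,3]$-segment (equivalently $\Des(S_{[1,3]})\ne\Des(T_{[1,3]})$), contradicting $S_{[1,3]}=T_{[1,3]}=R$.

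**The small-case analysis.** The real work — and the step I expect to be the main obstacle — is this last combinatorial case check: showing that when $S_{[1,3]}=T_{[1,3]}=R$ (with $R$ one of the two SYT of shape $(2,1)$) and $S\lessdot_{weak}T$, one cannot have the covering come from a transposition straddling $3$ and $4$ without disturbing the inner tableau. Because $\sh(R)=(2,1)$ is so small, there are only two tableaux $R$ to consider and only finitely many "shapes of the straddling move" relative to the positions of $1,2,3$ in a word; I would enumerate the possible relative orders of $1,2,3$ in $\sigma$ (there are only a handful consistent with $I(\sigma_{[1,3]})=R$, governed by $\Des(R)$ via Lemma~\ref{descent.lemma0}) and the position of the swapped adjacent pair $\{3\text{-ish},4\text{-ish}\}$, and in each case check directly that the insertion tableau on $[1,3]$ is forced to change, or else $\sigma$ and $\tau$ were already Knuth equivalent (so $S=T$, excluded). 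Having ruled out the straddling case and the within-$\{1,2,3\}$ case (which would change $R$) and the within-$\{4,\dots,n\}$ case (which contradicts our assumption $\st(S_{[4,n]})=\st(T_{[4,n]})$), we conclude $\st(S_{[4,n]})\lneq_{weak}\st(T_{[4,n]})$, and Lemma~\ref{conjecture.lemma1} gives $\mathcal V_{[R,R']}(S)\lessdot_{weak}\mathcal V_{[R,R']}(T)$, completing the proof. I expect the bookkeeping in the straddling case — tracking exactly which cell of the $(2,1)$-shape the label $3$ (resp. $4$) occupies after restriction and what a bump does to it — to be the one genuinely delicate point, though it is entirely finite.
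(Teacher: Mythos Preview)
Your reduction to Lemma~\ref{conjecture.lemma1} does not go through: the claim that $S\lessdot_{weak}T$ in $SYT_n^{R}$ forces $\st(S_{[4,n]})\lneq_{weak}\st(T_{[4,n]})$ is false. Take $n=4$, $R=\tableau{{1}&{3}\\{2}}$, and
\[
S=\tableau{{1}&{3}&{4}\\{2}},\qquad T=\tableau{{1}&{3}\\{2}&{4}}.
\]
Here $S\lessdot_{weak}T$ (witnessed by $2134\lessdot 2143$, and one checks there is nothing strictly between), both lie in $SYT_4^{R}$, and $\st(S_{[4,4]})=\st(T_{[4,4]})$ trivially. So the hypothesis of Lemma~\ref{conjecture.lemma1} is not available.

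The error in your case analysis is in the straddling case. If $\{a_j,a_{j+1}\}=\{a,b\}$ with $a\le 3<b$, then restricting to values in $\{1,2,3\}$ deletes $b$ and restricting to $\{4,\dots,n\}$ deletes $a$; hence $\sigma_{[1,3]}=\tau_{[1,3]}$ and $\sigma_{[4,n]}=\tau_{[4,n]}$ \emph{as words}, so neither segment insertion tableau moves. This does \emph{not} force $\sigma\Knuth\tau$ (witness $2134$ versus $2143$), so you cannot conclude $S=T$. Your ``within $\{1,2,3\}$'' case is also not ruled out as you state: since $\kc_{R}=\{213,231\}$, the adjacent swap with $\{a_j,a_{j+1}\}=\{1,3\}$ keeps $\sigma_{[1,3]}$ inside $\kc_{R}$, so it does \emph{not} change the inner tableau. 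In fact this is precisely the case the paper must treat by hand.

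The paper's argument is organized differently and does not route through Lemma~\ref{conjecture.lemma1}. One observes that applying the dual Knuth move on $\{1,2,3\}$ to $\sigma$ and $\tau$ still yields a weak-order cover $\sigma'\lessdot\tau'$ \emph{unless} $\{a_j,a_{j+1}\}=\{1,3\}$; this single observation already handles your straddling case and the ``both $\ge 4$'' case simultaneously. For the residual case $\{a_j,a_{j+1}\}=\{1,3\}$, one replaces $a_1\ldots a_{j-1}$ by its row word (noting that $2$ must sit in the leftmost cell of the first row of $I(a_1\ldots a_{j-1})$), performs explicit Knuth slides to produce new representatives in $\kc_S$ and $\kc_T$, and then applies the dual Knuth move to those to obtain a weak-order cover between elements of $\kc_{\mathcal V_{[R,R']}(S)}$ and $\kc_{\mathcal V_{[R,R']}(T)}$. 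The reverse implication is obtained not by the symmetry you invoke (which is circular as stated), but via the transpose anti-automorphism of Proposition~\ref{order.preserving.maps}, using $R^{t}=R'$.
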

\begin{proof} Since  $ S<_{weak}T$  there exist $\sigma
\in \kc_S$ and $\tau \in \kc_T$ such that     for some
$1\leq j<n$ we have
 $$
 \sigma=a_1\ldots a_j a_{j+1}\ldots a_n ~\text{ and }~\tau =a_1\ldots a_{j+1} a_{j}\ldots a_n, \text{ where } a_j< a_{j+1}.
 $$
 Observe that since $S$ and $T$ have the same  inner tableau $R$, we have $\{\sigma_{[1,3]},\tau_{[1,3]}\}\subset
 \kc_R=\{213,231\}$.

 If $\{a_j, a_{j+1}\}\not=\{1,3\}$ then applying
 the dual Knuth relation determined by $\{1,2,3\}$ on $\sigma$ and
 $\tau$ yields two permutations $\sigma' \in \mathcal{V}_{[R,R']}(S)$ and
 $\tau'\in \mathcal{V}_{[R,R']}(T)$ which still have
 $\sigma'\lessdot \tau'$ in the right weak order. Therefore
 $\mathcal{V}_{[R,R']}(S)\leq_{weak}\mathcal{V}_{[R,R']}(T)$ and it
 must be a covering relation.

Suppose  $\{a_j, a_{j+1}\}=\{1,3\}$.  Since $\kc_R=\{213,231\}$,  the insertion taleau $I(a_1\ldots a_{j-1})$ must  have the number $2$ its first row
left most position. Let for some $i\leq j-1$
$$b_1\ldots b_{i-1}~2~b_{i+1} \ldots
b_{j-1}$$ be the row word of $I(a_1\ldots a_{j-1})$ obtained by
reading numbers in each row of  $I(a_1\ldots a_{j-1})$ from left to right,  starting from
last row. Therefore the sequence $2~b_{i+1} \ldots b_{j-1}$ labels
the first row and moreover
$$\begin{aligned}
&b_1\ldots b_{i-1}~2~b_{i+1}b_{i+2} \ldots b_{j-1}~1~3~a_{j+2}\ldots a_n \in \kc_S \\
 &b_1\ldots b_{i-1}~2~b_{i+1}b_{i+2} \ldots b_{j-1}~3~1~a_{j+2}\ldots a_n\in \kc_T.
\end{aligned}$$
On the other hand  since $2<b_{i+1}< \ldots <b_{j-1}$ we have
$$\begin{aligned}
&2~b_{i+1}b_{i+2} \ldots b_{j-1}1~3~\Knuth ~2~b_{i+1}1~3~b_{i+2} \ldots b_{j-1}\\
&2~b_{i+1} b_{i+2}\ldots b_{j-1}1~3~\Knuth  ~b_{i-1}b_{i+1}b_{i+2}3~1~2 \ldots b_{j-1}
\end{aligned}$$
and moreover
$$\begin{aligned}
&b_1\ldots b_{i-1}2~b_{i+1}1~3~b_{i+2} \ldots b_{j-1}a_{j+2}\ldots a_n \in \kc_S \\
&b_1\ldots b_{i-1}b_{i+1}2~1~3~b_{i+2} \ldots b_{j-1}a_{j+2}\ldots a_n
\in \kc_T.\end{aligned}
$$
On the other hand applying dual Knuth relation determined by
$\{1,2,3\}$ we get
$$b_1\ldots b_{i-1}~3~b_{i+1}1~2~b_{i+2}  \ldots b_{j-1}a_{j+2}\ldots a_n \in \kc_{\mathcal{V}_{[R,R']}(S)} \text{
and } b_1\ldots b_{i-1}b_{i+1}~3~1~2~b_{i+2}  \ldots b_{j-1}a_{j+2}\ldots a_n
\in \kc_{\mathcal{V}_{[R,R']}(T)}
$$
which are clearly the generator of
$\mathcal{V}_{[R,R']}(S)\lessdot_{weak} \mathcal{V}_{[R,R']}(T)$.

Suppose now $ S<_{weak}T$ is a covering relation in $SYT_n^{R'}$.
Since  $R^t=R'$, by Proposition \ref{order.preserving.maps} we have
 $$ T^t\lessdot_{weak}S^t \text{ in }
SYT_n^{R}.by Proposition \ref{order.preserving.maps}$$ Now by the
previous result we have $\mathcal{V}_{[R,R']}(T^t)\lessdot_{weak}
\mathcal{V}_{[R,R']}(S^t)$ and therefore
$$\mathcal{V}_{[R',R]}(S)=(\mathcal{V}_{[R,R']}(S^t))^t\lessdot_{weak}
(\mathcal{V}_{[R,R']}(T^t))^t=\mathcal{V}_{[R,R']}(T).$$
\end{proof}

\begin{proposition}\label{two.rows.proposition} Suppose that  $S\lessdot_{weak}T$   in $SYT_n^{R}$ where
 $R \in SYT_k$ has exactly two rows.  If $R'$ is another tableau in $SYT_k$ having the same shape  with $R$ then
 $\mathcal{V}_{[R,R']}(S)\lessdot_{weak}\mathcal{V}_{[R,R']}(T)$ in $SYT_n^{R'}$.
\end{proposition}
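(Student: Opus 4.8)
The plan is to reduce the statement to the case where $R'$ arises from $R$ by a single dual Knuth relation, and then, from a witnessing covering relation in $S_{n}$, to build a witnessing covering relation between the images. Two reductions come first. By the Remark after Conjecture~\ref{main.conjecture}, any $R'$ of the same (two--row) shape as $R$ is reached from $R$ by a chain $R=R_{0},R_{1},\dots,R_{m}=R'$ of single dual Knuth moves, each $R_{t}$ again having two rows, and a composition of maps each of which sends covering relations to covering relations does the same; so it suffices to treat one step, determined by a triple $\{i,i+1,i+2\}$ with $1\le i\le k-2$. Second, it is enough to show that $\mathcal{V}_{[R,R']}(S)\le_{weak}\mathcal{V}_{[R,R']}(T)$: that this is in fact a covering relation then follows exactly as at the end of the proof of Lemma~\ref{conjecture.lemma1}, by applying the inverse map $\mathcal{V}_{[R',R]}$ (available since $R'$ also has two rows) to any hypothetical strictly intermediate $Q$, which by Lemma~\ref{inner.restriction.lemma} must again lie in $SYT_{n}^{R'}$.

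For the main construction, take a witnessing covering relation $S\lessdot_{weak}T$ in $SYT_{n}^{R}$ given by $\sigma\in\kc_{S}$, $\tau\in\kc_{T}$ with $\tau=\sigma s_{j}$, $\sigma=a_{1}\cdots a_{j}a_{j+1}\cdots a_{n}$ and $a_{j}<a_{j+1}$. Since $S_{[1,k]}=T_{[1,k]}=R$, Lemma~\ref{descent.lemma0} shows that $\sigma$ satisfies the descent condition on $\{i,i+1\}$ that makes the dual Knuth relation determined by $\{i,i+1,i+2\}$ applicable to $R$; hence that relation is applicable to $\sigma$ as well, and it acts on $\sigma$ as left multiplication by a transposition $(p\ p{+}1)$ with $\{p,p+1\}\subset\{i,i+1,i+2\}$; write $D$ for this operation. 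If $\{p,p+1\}\cap\{a_{j},a_{j+1}\}=\varnothing$, then $D$ is equally applicable to $\tau$ with the same swap, $D(\tau)=D(\sigma)s_{j}$, positions $j,j+1$ of $D(\sigma)$ still carry the increasing pair $a_{j}<a_{j+1}$, so $D(\sigma)\lessdot D(\tau)$ in $S_{n}$; taking insertion tableaux gives $\mathcal{V}_{[R,R']}(S)=I(D(\sigma))\le_{weak}I(D(\tau))=\mathcal{V}_{[R,R']}(T)$, as desired.

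The remaining case is $\{a_{j},a_{j+1}\}\cap\{p,p+1\}\ne\varnothing$, i.e.\ $s_{j}$ moves one of the two letters that $D$ interchanges. Here one argues as in the proof of Lemma~\ref{conjecture.lemma2}: because $R$ has two rows, $\st(\sigma_{[1,k]})\in\kc_{R}$ is $321$--avoiding, so the insertion tableau of the prefix of $\sigma$ preceding the active letters has at most two rows and a tightly constrained filling near the relevant corners. This lets one replace $\sigma$ and $\tau$, within their Knuth classes, by words in a normal form in which the letters $i,i+1,i+2$ sit immediately beside the transposed letters, so that after applying $D$ one reads off directly a pair $\sigma'\lessdot\tau'$ with $\sigma'\in\kc_{\mathcal{V}_{[R,R']}(S)}$ and $\tau'\in\kc_{\mathcal{V}_{[R,R']}(T)}$. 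One then enumerates the finitely many sub--configurations according to which of $a_{j},a_{j+1}$ equals which element of $\{i,i+1,i+2\}$ and which admissible relative order occurs; the $k=3$ instance of each is precisely the computation carried out in Lemma~\ref{conjecture.lemma2}, and for $k>3$ the same Knuth rewriting applies verbatim, with the remaining letters carried along as spectators.

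The main obstacle is exactly this last case; its substance is (a) verifying that the Knuth rewriting into normal form is genuinely available --- this is where the two--row hypothesis on $R$ enters, forcing the relevant prefix to insert to a $321$--avoiding (hence essentially one--parameter) tableau and keeping the Schensted bumping routes under control --- and (b) checking in each sub--configuration that applying $D$ still raises the Coxeter length by exactly one, so that one obtains $\sigma'\lessdot\tau'$ rather than merely $\sigma'\le_{weak}\tau'$. I do not expect the transpose symmetry of Proposition~\ref{order.preserving.maps} to shorten this bookkeeping, since the transpose of a two--row shape is a two--column shape --- a different case --- so the sub--configurations must be handled on their own, though the analysis closely parallels, and reuses, that of Lemma~\ref{conjecture.lemma2}.
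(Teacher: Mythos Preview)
Your overall plan --- reduce to a single dual Knuth move, manufacture a witnessing covering pair on permutations, and recover the covering property via the inverse map $\mathcal{V}_{[R',R]}$ --- matches the paper, and the easy case (the swapped values avoid $\{a_j,a_{j+1}\}$) is handled correctly. The gap is in the hard case, and the paper closes it with two devices you are missing.

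First, the paper argues by induction on $k$, and this is what makes the hard case tractable: if the dual Knuth move on $\{i,i+1,i+2\}$ swaps $i$ and $i+1$ (rather than $i+1$ and $i+2$), or if $i+2<k$, then it is already a move on the smaller common inner tableau $S_{[1,k-1]}$ (respectively $S_{[1,i+2]}$), and induction applies. This reduces everything to the single triple $\{k-2,k-1,k\}$, to the hard sub-case $\{a_j,a_{j+1}\}=\{k-2,k\}$, and to the two shapes of $R$ in which $k$ and $k-1$ occupy the two corner cells and get interchanged. Second, your assertion that the insertion tableau of the prefix $a_{1}\cdots a_{j-1}$ has at most two rows is false in general, since that prefix may contain letters larger than $k$. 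What the paper actually uses, in the surviving configuration with $k$ in the first row of $R$, is only that $k-1$ must sit in the \emph{first row} of $I(a_{1}\cdots a_{j-1})$ (otherwise inserting $\tau$ would bump $k$ out of the first row of $R$); this single constraint localizes the row word enough to carry out the Knuth rewriting explicitly. Finally, you are wrong to dismiss transpose symmetry: the paper handles the mirror configuration (with $k$ in the second row of $R$) precisely by reversing $\sigma,\tau$ to pass to $T^{t}\le_{weak}S^{t}$ in $SYT_{n}^{R^{t}}$ and running the analogous first-row argument there; it does not invoke a separate two-column proposition but works directly at the word level after reversal.
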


\begin{proof} Suppose that $S\leq_{weak}T$
 is a covering relation in $SYT_n^{R}$ and $R\in SYT_k$ has two rows.
 When $k<3$ there is nothing to prove. For $k=3$ the only case that needs to be explored is when
 $R$ has non vertical or non horizontal shape, hence Lemma \ref{conjecture.lemma2} gives the desired result.

So we suppose the statement is true for $k-1$ and let $R\in SYT_k$.  It is enough to consider the case when
 $R$ and $R'$ differ by only
one dual Knuth relation determined by the triple $\{i,i+1,i+2\}$
where $i+2= k$. If $i+2<k$ then $S_{[1,i+2]}=T_{[1,i+2]}$ of $R$ has
still two rows and  induction  gives the desired result.  If $i+2=k$
then    we have the following classes of possibilities for the
tableau $R$:
\begin{equation}\label{row.lemma}
\begin{aligned}
&\tableau{{*}&{*}&_{k-2}& _k \\{*}&{*}& _{k-1}}~&~
&\tableau{{*}&{*}&_{k-2}& _{k-1} \\{*}&{*}& _{k}}~&~
&\tableau{{*}&{*}&{*}& _{k-2 }\\{*}&_{k-1}& _{k}}~&~
&\tableau{{*}&{*}&{*}& _{k-1 }\\{*}&_{k-2}& _{k}}&\\
&~~~~(a)&&~~~~(b)&&~~~~(c)&&~~~~(d)&
\end{aligned}
\end{equation}
Observe that in the last  two classes the dual Knuth relation
determined by $\{k-2,k-1,k\}$ interchanges the places of  $k-1$ and
$k-2$ and so  they refer to the cases with smaller inner tableau
$S_{[1,k-1]}=T_{[1,k-1]}$ and  the induction argument gives the
required  result.

For the first two classes  we have the following analysis:  Since $
S\lessdot_{weak}T$  there exist $\sigma \in \kc_S$ and $\tau \in
\kc_T$ such that  $\sigma<\tau$ is also a covering relation the
right weak order on $S_n$ i.e.,   for some $1\leq j<n$ we have
 $$
 \sigma=a_1\ldots a_j a_{j+1}\ldots a_n ~\text{ and }~\tau =a_1\ldots a_{j+1} a_{j}\ldots a_n, \text{ where } a_j< a_{j+1}.
 $$

 If $\{a_j, a_{j+1}\}\not=\{k,k-2\}$ then applying
 the dual Knuth relation determined by $\{k,k-1,k-2\}$ on $\sigma$ and
 $\tau$ yields two permutations $\sigma' \in \mathcal{V}_{[R,R']}(S)$ and
 $\tau'\in \mathcal{V}_{[R,R']}(T)$ which still have
 $\sigma'\lessdot \tau'$ in the right weak order. Therefore
 $\mathcal{V}_{[R,R']}(S)\leq_{weak}\mathcal{V}_{[R,R']}(T)$ and it
 must be a covering relation.

Now let  $\{a_j, a_{j+1}\}=\{k,k-2\}$, i.e.,
\begin{equation} \label{row.lemma2}
 \sigma=a_1\ldots~ a_{j-1} (k-2)~k ~a_{j+2}\ldots a_n ~\text{ and }~\tau =a_1\ldots a_{j-1} ~k~(k-2)~a_{j+2}\ldots a_n
 \end{equation}

\noindent {\it Case 1.}  We first consider the case illustrated in
\eqref{row.lemma}-$(a)$, where   $k-1$ comes before $k$ in every
permutations in the Knuth classes of $S$ and $T$.  Therefore the
tableau $I(a_1\ldots a_{j-1})$ must have the number $k-1$ and it
must be located  in the first row, since otherwise  the number $k$
drops to the second row of $T$ at the end of the insertion  of
$\tau$ and that is clearly a contradiction.   Let for some $r\leq
j-1$
$$b_1\ldots b_{r-1}~(k-1)~b_{r+1}b_{r+2} \ldots
b_{j-1}$$ be the row word of $I(a_1\ldots a_{j-1})$ obtained by
reading numbers in each row of  $I(a_1\ldots a_{j-1})$ from left to
right,  starting from last row. Therefore $(k-1)~b_{r+1}b_{r+2}
\ldots b_{j-1}$ lies on the first row and so $k-1<b_{r+1}<b_{r+2}<
\ldots <b_{j-1}$.

Now it is easy to see that
$$
b_1\dots b_{r-1}(k-1)b_{r+1}b_{r+2} \ldots
b_{j-1}(k-2)k~a_{j+2}\ldots a_n ~\Knuth~ b_1\ldots
b_{r-1}(k-1)b_{r+1}(k-2)k~ \ldots b_{j-1}a_{j+2}\ldots a_n
$$
lies in the Knuth class $S$ where as
$$b_1\ldots
b_{r-1}(k-1)b_{r+1}b_{r+2} \ldots b_{j-1}~k(k-2)a_{j+2}\ldots
 a_n ~\Knuth~ b_1\ldots b_{r-1}b_{r+1}(k-1)(k-2)k \ldots b_{j-1}a_{j+2}\ldots a_n
$$
lies in the Knuth class of $T$. Moreover  applying dual Knuth
relation determined by $\{k,k-1,k-2\}$ on the latter  permutations
we get
$$\begin{aligned}
b_1\ldots b_{r-1}~k~b_{r+1}(k-2)~(k-1)~ \ldots b_{j-1}a_{j+2}\ldots a_n \in \kc_{\mathcal{V}_{[R,R']}(S)} \\
 b_1\ldots b_{r-1}~b_{r+1}~k~(k-2)~(k-1) \ldots b_{j-1}a_{j+2}\ldots a_n
\in \kc_{\mathcal{V}_{[R,R']}(T)}
\end{aligned}
$$
and therefore $\mathcal{V}_{[R,R']}(S)\leq_{weak}
\mathcal{V}_{[R,R']}(T)$.

\noindent {\it Case 2.}  For the case illustrated in
\eqref{row.lemma}-(b),   We have
$$ T^t\leq_{weak} S^t \in SYT_N^{R^t} $$
where $R^t$ is a tableau of at most two columns having $k-1$ in its first column and $k$ in its second column.  Therefore we obtained
 $\sigma^t \in \kc_{S^t}$ and $\tau^t \in \kc_{T^t}$  by reversing $\sigma$ and $\tau$ of \eqref{row.lemma2} .i.e.,
$$
 \sigma^t=a_n\ldots (k-1)\ldots a_{j+2}~k~(k-2)~a_{j-1}\ldots a_1 ~\text{ and }~\tau^t =a_n\ldots (k-1)\ldots  a_{j+1} ~(k-2)~k~a_{j-1}\ldots a_1
 $$
Now consider the tableau  $I(a_n\ldots (k-1)\ldots a_{j+2})$.
Suppose first that  the  left most cell in  its  first row is
labeled by a number,  say $x$, which is smaller then $k-1$.  This
implies that  insertion of  the sequence  $(k-2)k$ in to
$I(a_n\ldots (k-1)\ldots a_{j+2})$ places the sequence $(k-2)k$ to
the right of $x$ but this contradicts to the fact that the inner
tableau $R^t$ has at most two columns. Therefore we have  $x=k-1$.
Now let for some $r\leq n- j-1$
$$b_1\ldots b_{r-1}~(k-1)~b_{r+1} \ldots
b_{n-j-1}$$ be the row word of $I(a_n\ldots (k-1)\ldots a_{j+2})$.
 Therefore $(k-1)~b_{r+1}b_{r+2}
\ldots b_{n-j-1}$ lies on the first row and so $k-1<b_{r+1}<b_{r+2}<
\ldots <b_{n-j-1}$ which yields
$$
b_1\ldots b_{r-1}~(k-1)~b_{r+1} \ldots
b_{n-j-1}~k~(k-2)~a_{j-1}\ldots a_1~\Knuth~ b_1\ldots
b_{r-1}~b_{r+1}~(k-1)~(k-2)~k~ \ldots b_{n-j-1}a_{j-1}\ldots a_1
$$
lies in $\kc_{S^t}$ whereas
$$ b_1\ldots b_{r-1}~(k-1)~b_{r+1} \ldots b_{n-j-1}~(k-2)~k~a_{j-1}\ldots
a_1~\Knuth~ b_1\ldots b_{r-1}~(k-1)~b_{r+1}~(k-2)~k \ldots
b_{n-j-1}a_{j-1}\ldots a_1
$$
lies in $\kc_{T^t}$. Now  reversing and then  applying dual Knuth
relation determined by $\{k,k-1,k-2\}$ on the latter  permutations
we get

$$\begin{aligned}
&a_1\ldots a_{j-1}b_{n-j-1}\ldots (k-1)(k-2)k~b_{r+1}b_{r-1}\ldots
b_1
 \in \kc_{\mathcal{V}_{[R,R']}(S)} \\
&a_1\ldots a_{j-1}b_{n-j-1}\ldots (k-1)(k-2) b_{r+1}
~k~b_{r-1}\ldots b_1 \in \kc_{\mathcal{V}_{[R,R']}(T)}
\end{aligned}
$$
and therefore $\mathcal{V}_{[R,R']}(S)\leq_{weak}
\mathcal{V}_{[R,R']}(T)$.

Lastly the fact  that resulting relations are in fact covering
relations   follows directly.

\end{proof}

\begin{corollary} Suppose that  $S\leq_{weak}T$  is a covering relation in $SYT_n^{R}$ where $R \in SYT_k$ has exactly two colums.
 If $R'$ is another tableau in $SYT_k$ having the same shape  with $R$ then    $\mathcal{V}_{[R,R']}(S)\leq_{weak}\mathcal{V}_{[R,R']}(T)$ is also a coving relation in $SYT_n^{R'}$.
\end{corollary}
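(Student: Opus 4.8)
The plan is to reduce the statement to Proposition~\ref{two.rows.proposition} by transposing, exactly as in the second half of the proof of Lemma~\ref{conjecture.lemma2}. Two facts are needed. First, by Proposition~\ref{order.preserving.maps}(2) the transpose map $U\mapsto U^t$ is an order-reversing bijection of $(SYT_n,\leq_{weak})$, hence it carries covering relations to covering relations. Second, transposition is compatible both with restriction to segments and with inner tableau translation: if $R$ has two columns then $R^t$ has two rows, $\sh(R)=\sh(R')$ forces $\sh(R^t)=\sh((R')^t)$, and for $U\in SYT_n^{R}$ one has
$$ (U^t)_{[1,k]}=(U_{[1,k]})^t,\qquad \bigl(\mathcal{V}_{[R,R']}(U)\bigr)^t=\mathcal{V}_{[R^t,(R')^t]}(U^t). $$
In particular $U\in SYT_n^{R}$ if and only if $U^t\in SYT_n^{R^t}$.

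To justify the two displayed identities I would argue on words. For $\sigma\in\kc_U$ the reversed word $\sigma^t$ lies in $\kc_{U^t}$, and the subword of $\sigma^t$ on the letters $\{1,\dots,k\}$ is the reverse of the corresponding subword of $\sigma$; together with Lemma~\ref{j-d-t-initial-final} applied with $i=1$ this gives the first identity. For the second, recall from the discussion after Proposition~\ref{dual.Knuth} that $\mathcal{V}_{[R,R']}(U)$ is produced from $U$ by a sequence of dual Knuth relations whose triples $\{i,i+1,i+2\}$ all lie inside $\{1,\dots,k\}$; since reversing a word sends the dual Knuth relation determined by a triple to the dual Knuth relation determined by the \emph{same} triple (merely interchanging the two permutations involved), applying the same sequence of moves to $\sigma^t$ realizes $\bigl(\mathcal{V}_{[R,R']}(U)\bigr)^t$ as the result of replacing the inner tableau $R^t$ of $U^t$ by $(R')^t$ (the moves do not touch the entries exceeding $k$, so the outer part is unchanged), which is $\mathcal{V}_{[R^t,(R')^t]}(U^t)$ by well-definedness of that map.

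Granting these facts the proof is immediate: from $S\lessdot_{weak}T$ in $SYT_n^{R}$ we obtain $T^t\lessdot_{weak}S^t$ in $SYT_n^{R^t}$, and $R^t$ has exactly two rows, so Proposition~\ref{two.rows.proposition} gives
$$ \mathcal{V}_{[R^t,(R')^t]}(T^t)\lessdot_{weak}\mathcal{V}_{[R^t,(R')^t]}(S^t) \quad\text{in } SYT_n^{(R')^t}. $$
Transposing this covering relation once more and invoking the second displayed identity turns it into $\mathcal{V}_{[R,R']}(S)\lessdot_{weak}\mathcal{V}_{[R,R']}(T)$ in $SYT_n^{R'}$, as claimed. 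None of the steps is deep; the only real care is in cleanly formulating and verifying the two compatibility identities of the first paragraph, and these simply repackage manipulations already present in the excerpt (the transpose argument in Lemma~\ref{conjecture.lemma2} and the formula $\tau^t\in\kc_{T^t}$).
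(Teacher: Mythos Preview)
Your proof is correct and follows exactly the approach of the paper: transpose to reduce to the two-row case, apply Proposition~\ref{two.rows.proposition}, and transpose back using the compatibility $\mathcal{V}_{[R,R']}(U)=(\mathcal{V}_{[R^t,(R')^t]}(U^t))^t$. The paper's proof is terser and simply asserts the compatibility identity without justification, whereas you have supplied the underlying argument via words and dual Knuth moves; otherwise the two proofs are identical.
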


\begin{proof} By Proposition \ref{order.preserving.maps} we have $T^t\leq_{weak}S^t$ in   $SYT_n^{R^t}$ where $R^t$ has exactly two rows.
Now $(R')^t$ has the same shape with $R^t$ and by previous theorem
 $\mathcal{V}_{[R^t,(R')^t]}(T^t)\leq_{weak}\mathcal{V}_{[R^t,(R')^t]}(S^t)$. Therefore
  $$\mathcal{V}_{[R,R']}(S)=(\mathcal{V}_{[R^t,R'^t]}(S^t))^t\leq_{weak}(\mathcal{V}_{[R^t,R'^t]}(T^t))^t=\mathcal{V}_{[R,R']}(T).$$
  \end{proof}

\begin{definition} For $T\in SYT_n$  and $A$ is  a corner cell of
$T$, denote by
$$T^{\uparrow A}~~ \text{ and }~~ \eta(T^{\uparrow A}) $$
the tableau obtained by applying reverse insertion algorithm to $T$
through the corner cell $A$ and  respectively the number which
leaves the tableau at the end.
\end{definition}

The following result on the hook shape tableaux is easy to deduce by
using reverse RSK algorithm.

\begin{lemma}\label{hook.lemma2} Let $R \in SYT_k$ be a tableau of hook shape with more then two rows and two columns and suppose that  the only two corner cells of $R$, say $A$ and $B$   are labeled by
$k$ or $k-1$.   Then $$\eta(R^{\uparrow A})\not = \eta(R^{\uparrow
B})$$ and if  $a_1\ldots a_k$ and $b_1\ldots b_k$ be two
permutations in the Knuth class of $R$ with $a_k=b_k$ then
$$I(a_1\ldots a_{k-1})=I(b_1\ldots b_{k-1}).$$
\end{lemma}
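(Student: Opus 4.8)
The plan is to analyze the hook-shape tableau $R$ directly through the reverse RSK (reverse insertion) algorithm applied at its two corner cells. Write the hook shape as $(p,1^{q})$ with $p\geq 2$ and $q\geq 2$ (``more than two rows and two columns''), so that the two corner cells are $A=(1,p)$, the rightmost cell of the first row, and $B=(q+1,1)$, the bottom cell of the first column. By hypothesis the labels of $A$ and $B$ are $\{k-1,k\}$ in some order. First I would observe that reverse insertion through $A$ simply deletes the rightmost cell of the first row, so $\eta(R^{\uparrow A})$ equals the label of $A$, and $R^{\uparrow A}$ is again a (standard) hook tableau on $k-1$ cells. On the other hand, reverse insertion through the bottom corner $B$ bumps the entry of $B$ up through the first column, which in turn bumps an entry out of the first row; the number $\eta(R^{\uparrow B})$ that leaves is the entry that was in position $(1,1)$ after all the column bumps, and in any case it is \emph{at most} the entry sitting in cell $(2,1)$ of $R$, hence strictly less than the labels of both corner cells when $q\geq 2$. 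Since the label of $A$ is one of $k-1,k$, we get $\eta(R^{\uparrow A})\in\{k-1,k\}$ while $\eta(R^{\uparrow B})<k-1$, giving $\eta(R^{\uparrow A})\neq\eta(R^{\uparrow B})$ immediately. (I should double-check the degenerate reading when $p=2$ or $q=2$, but the bound $\eta(R^{\uparrow B})\le R_{(2,1)} < \min\{R_A,R_B\}$ survives in those cases too.)

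For the second assertion, suppose $a_{1}\ldots a_{k}$ and $b_{1}\ldots b_{k}$ both lie in $\kc_{R}$ with $a_{k}=b_{k}=:m$. The key standard fact about RSK is that deleting the last letter of a word corresponds to reverse insertion through the cell of $I$(word) that was added last, i.e. through the cell labeled $k$ in the recording tableau; equivalently, $I(a_{1}\ldots a_{k-1})$ is obtained from $R=I(a_{1}\ldots a_{k})$ by reverse insertion through whichever corner of $R$ carries the recording label $k$. Now the recording tableaux $Q(a_{1}\ldots a_{k})$ and $Q(b_{1}\ldots b_{k})$ need not be equal, so \emph{a priori} the label $k$ could sit in $A$ for one word and in $B$ for the other; this is exactly the situation the first part rules out. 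Indeed, the letter removed by that reverse insertion is precisely the last letter of the word, namely $m$ in both cases. So if the $k$-cell of $Q(a_{1}\ldots a_{k})$ were $A$ and that of $Q(b_{1}\ldots b_{k})$ were $B$, we would get $m=\eta(R^{\uparrow A})=\eta(R^{\uparrow B})$, contradicting the first part. Hence the $k$-cell is the \emph{same} corner $C\in\{A,B\}$ for both words, and therefore $I(a_{1}\ldots a_{k-1})=R^{\uparrow C}=I(b_{1}\ldots b_{k-1})$, as claimed.

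The step I expect to be the main obstacle is making the behaviour of reverse insertion through the \emph{bottom} corner $B$ precise enough to conclude $\eta(R^{\uparrow B})<k-1$ with full rigor: one must track the chain of bumps up the first column and then across the first row and argue that the ejected value is bounded by an entry of $R$ that is itself smaller than both corner labels. This is a routine consequence of the monotonicity built into the insertion algorithm (each bump replaces an entry by a strictly smaller one, and the first-column entries of a standard hook are $R_{(1,1)}<R_{(2,1)}<\cdots$, all at most the smaller corner label), but it is the place where the hook hypotheses ``more than two rows and two columns'' and ``corners labeled $k-1$ or $k$'' are genuinely used, so I would write that bound out carefully rather than leave it to the reader. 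Everything else is bookkeeping with the standard RSK facts quoted from Knuth's treatment cited above.
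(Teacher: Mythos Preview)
Your argument is correct and follows the same line as the paper's proof: compute $\eta(R^{\uparrow A})$ and $\eta(R^{\uparrow B})$ directly from the reverse insertion algorithm to see they differ, then use that any word in $\kc_R$ ending in $a_k$ must arise from reverse insertion through the unique corner $C$ with $\eta(R^{\uparrow C})=a_k$, forcing $I(a_1\ldots a_{k-1})=R^{\uparrow C}$ for both words. The paper is terser---it displays the possible positions of $k-2,k-1,k$ in $R$ (up to transpose) and declares the inequality ``clear''---whereas you carry out the bumping bound $\eta(R^{\uparrow B})<R_{(2,1)}\le k-2$ explicitly for an arbitrary hook; but the underlying mechanism and the deduction of the second claim from the first are identical. (Two cosmetic points: ``more than two rows and two columns'' in the paper means $p\ge 3$ and $q\ge 2$, not $p\ge 2$; and your description of $\eta(R^{\uparrow B})$ as ``the entry in position $(1,1)$ after all the column bumps'' is not literally right---it is whichever first-row entry is bumped out by $R_{(2,1)}$---but you immediately fall back to the correct inequality $\eta(R^{\uparrow B})\le R_{(2,1)}$, which is all that is needed.)
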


\begin{proof} Since the tableaux required to have more then two rows and two
columns it is enough the consider  the following tableaux  together
with their transposes, where $k$ labels the cell $A$ and $k-1$
labels the cell $B$.
\begin{equation}\label{hook.lemma}
\tableau{{*}&_{k-2}& _k \\{*}\\ _{k-1}}~ \mbox{  }~~ \tableau{{*}&
{*}& _k
\\_{k-2}\\ _{k-1}}
\end{equation}

Clearly $\eta(R^{\uparrow A})\not = \eta(R^{\uparrow B})$ and this
shows that if   $a_k=b_k$ then they must leave the tableau at the
end of a reverse insertion applied on the same corner cell, say $A$.
Therefore $ I(a_1\ldots a_{k-1})=R^{\uparrow A}=I(b_1\ldots
b_{k-1})$.

\end{proof}

\begin{proposition} Suppose that  $S\leq_{weak}T$  is a covering relation in $SYT_n^{R}$ where
$R \in SYT_k$ has hook shape. If $Q$ is another tableau in $SYT_k$
having the same shape  with $R$ then
  $\mathcal{V}_{[R,Q]}(S)\leq_{weak}\mathcal{V}_{[R,Q]}(T)$ is also a coving relation in $SYT_n^{Q}$.
\end{proposition}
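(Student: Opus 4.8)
The plan is to imitate the structure of Proposition~\ref{two.rows.proposition}, reducing to the case where $R$ and $R'=Q$ differ by a single dual Knuth relation determined by a triple $\{i,i+1,i+2\}$, and then to peel off the easy situations using Lemma~\ref{conjecture.lemma1} and the induction hypothesis, leaving only a small number of genuinely new configurations. First I would set up the induction on $k$: for $k\le 3$ the statement is covered by Lemma~\ref{conjecture.lemma2} (and the trivial vertical/horizontal cases), so assume it for all hook tableaux of size $<k$. Given the covering relation $S\lessdot_{weak}T$ in $SYT_n^R$ with $R$ a hook, pick $\sigma\in\kc_S$, $\tau\in\kc_T$ with $\sigma=a_1\ldots a_j a_{j+1}\ldots a_n\lessdot a_1\ldots a_{j+1}a_j\ldots a_n=\tau$ a covering relation in the right weak order. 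If $i+2<k$, then $S_{[1,i+2]}=T_{[1,i+2]}$ is again of hook shape, so by restricting and invoking induction (via the subtableau $S_{[1,i+2]}$, together with Lemma~\ref{conjecture.lemma1} applied to the complementary segment) we are done; thus we may assume $i+2=k$, i.e.\ the dual Knuth move acts on the two corner cells of $R$ (the end of a leg or arm) and the cell just inside.

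The heart of the argument is then a case analysis on the hook $R$, exactly as in \eqref{hook.lemma} of Lemma~\ref{hook.lemma2}: up to transpose (legitimate by Proposition~\ref{order.preserving.maps}, since transposing sends hooks to hooks and $\mathcal{V}$ commutes with transposition as used at the end of Lemma~\ref{conjecture.lemma2}), $R$ has the shape $\tableau{{*}&_{k-2}&_k\\{*}\\_{k-1}}$ or $\tableau{{*}&{*}&_k\\_{k-2}\\_{k-1}}$, where the dual Knuth relation interchanges the labels $k-1$ and $k$ in the corner cells. In the benign sub-case $\{a_j,a_{j+1}\}\ne\{k-1,k\}$ (or whichever pair sits in the two corners), applying the dual Knuth relation determined by $\{k-2,k-1,k\}$ directly to $\sigma$ and $\tau$ keeps the covering relation $\sigma'\lessdot\tau'$ intact, giving $\mathcal{V}_{[R,Q]}(S)\le_{weak}\mathcal{V}_{[R,Q]}(T)$. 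In the remaining sub-case $\{a_j,a_{j+1}\}=\{k-1,k\}$, I would use Lemma~\ref{hook.lemma2}: since $\eta(R^{\uparrow A})\ne\eta(R^{\uparrow B})$, the prefix $I(a_1\ldots a_{j-1})$ is forced, and in particular one can locate where $k-2$ (and $k-1$) sit and read off a convenient row word $b_1\ldots b_{r-1}\,c\,b_{r+1}\ldots b_{j-1}$ of that prefix, with the $b$'s increasing past the relevant corner entry. Then Knuth-equivalences let me rewrite the representatives so that the triple $\{k-2,k-1,k\}$ sits in consecutive positions in a pattern to which the dual Knuth move applies, and after the move the two resulting words are still a covering pair in the right weak order; this yields $\mathcal{V}_{[R,Q]}(S)\le_{weak}\mathcal{V}_{[R,Q]}(T)$.

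Finally, upgrading $\le_{weak}$ to a covering relation $\lessdot_{weak}$ is formal: as in the last paragraph of Lemma~\ref{conjecture.lemma1}, if some $Q'$ were strictly between $\mathcal{V}_{[R,Q]}(S)$ and $\mathcal{V}_{[R,Q]}(T)$ in $SYT_n^Q$, then applying $\mathcal{V}_{[Q,R]}$ would insert a tableau strictly between $S$ and $T$ in $SYT_n^R$, contradicting that $S\lessdot_{weak}T$. I expect the main obstacle to be the book-keeping in the $\{a_j,a_{j+1}\}$ equal to the corner pair sub-case: one must check that the Knuth rewriting of $\sigma$ and $\tau$ can be done uniformly (using that the portion of the hook outside the triple behaves like a single row or single column, so the relevant entries really are increasing), and that the dual Knuth relation then acts in the ``interchange $k-1$ and $k$'' mode rather than the ``interchange $k-2$ and $k-1$'' mode — this is precisely what Lemma~\ref{hook.lemma2} is designed to guarantee. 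The transpose symmetry halves the work, and the leg-length-one or arm-length-one degeneracies fall under the two-row / two-column results already proved, so no separate treatment is needed there.
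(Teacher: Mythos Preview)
Your outline tracks the two-row proof too closely and misses the new idea the hook case needs. First, the critical pair is wrong. Since $S$ and $T$ share the inner tableau $R$, they agree on $\Des\cap[1,k-1]$; hence $\{a_j,a_{j+1}\}=\{k-1,k\}$ is \emph{impossible} (swapping adjacent $k-1,k$ would flip whether $k-1\in\Des$). The pair that actually breaks the ``apply the dual Knuth move directly to $\sigma,\tau$'' step is $\{a_j,a_{j+1}\}=\{k-2,k\}$: in $\sigma$ the pattern of $k-1,k-2,k$ is $(k-1)\ldots(k-2)(k)$ while in $\tau$ it is $(k-1)\ldots(k)(k-2)$, so the dual Knuth move on $\{k-2,k-1,k\}$ performs \emph{different} swaps on $\sigma$ and on $\tau$, and $\sigma'\lessdot\tau'$ fails. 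Your ``benign'' sub-case therefore contains the real obstruction, and your ``remaining'' sub-case is vacuous.

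Second, for the genuine critical case $\{a_j,a_{j+1}\}=\{k-2,k\}$ the paper does \emph{not} use row-word rewriting. It runs a different induction, on $n$: using $\sigma_n=a_n=\tau_n$ (or the symmetric statement at the first letter, handled by transposition) one reverse-inserts to obtain $S'=S^{\uparrow A_S}$ and $T'=T^{\uparrow A_T}$ in $SYT_{n-1}$. If $a_n>k$ these still have inner tableau $R$ and induction on $n$ applies directly. If $a_n\le k$, Lemma~\ref{hook.lemma2} is used not to ``force the prefix $I(a_1\ldots a_{j-1})$'' but to show that the reverse insertions from $S$ and $T$ exit $R$ through the \emph{same} corner $C$, so $S',T'$ share the smaller hook $R^{\uparrow C}$; one then checks that necessarily $a_n<k-2$ (the alternatives contradict $\{a_j,a_{j+1}\}=\{k-2,k\}$), so the dual Knuth move on $\{k-2,k-1,k\}$ still makes sense on $R^{\uparrow C}$ and induction finishes. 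Your proposed Knuth-rewriting of $I(a_1\ldots a_{j-1})$ relies on a first-row structure that a general hook does not provide, and your induction on $k$ alone cannot reach the $a_n>k$ reduction.
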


\begin{proof}
Here we just need to deal with the case when
 $R$ is not a  horizontal or  a vertical tableau. On the other hand  for $k\leq 4$ the only non horizontal
 or vertical hook shape tableaux have either two rows or two columns and  Proposition \ref{two.rows.proposition}
 gives the required result.
 Therefore  in the rest  we assume that $n>k>4$ and $R$ has more then two rows and two columns.

We may assume that $R$ and $Q$ differ by only one dual Knuth
relation determined by the triple  $\{i,i+1,i+2\}$. If $i+2<k$ then
the subtableau $S_{[1,i+2]}=T_{[1,i+2]}$ of $R$  has still hook
shape and induction gives the desired result. So let   $R$ and $R'$
differ by a single dual Knuth relation determined by
$\{k-2,k-1,k\}$. Since $R$ has a hook shape this implies the dual
Knuth relation interchanges the places of $k$ and $k-1$ i.e., the
only two corner cells of $R$ are occupied by $k$ and $k-1$.

Now since $ S<_{weak}T$ is a covering relation, there exist $\sigma
\in \kc_S$ and $\tau \in \kc_T$ such that  $\sigma<\tau$ is also a
covering relation the right weak order on $S_n$ i.e.,   for some
$1\leq j<n$ we have
 $$
 \sigma=a_1\ldots a_j a_{j+1}\ldots a_n ~\text{ and }~\tau =a_1\ldots a_{j+1} a_{j}\ldots a_n, \text{ where } a_j< a_{j+1}.
 $$

If $\{a_j, a_{j+1}\}\not=\{k,k-2\}$ the result follows as discussed
in the proof of Proposition~\ref{two.rows.proposition}. So in the
rest we assume that $\{a_j, a_{j+1}\}=\{k,k-2\}$.

Observe that we have either  $\sigma_1=a_1=\tau_1$ or
$\sigma_n=a_n=\tau_n$. WL.O.G  assume $\sigma_n=a_n=\tau_n$ (the
first one can be deal with the same method on the transposes of the
tableaux). Therefore there exist some corner cells say $A_S$ and
$A_T$ of $S$ and $T$ respectively such that
$$\begin{aligned}&\eta(S^{\uparrow A_S}) =a_n= \eta(T^{\uparrow
A_T})\\
&S'=S^{\uparrow A_S}=I(a_1\ldots a_j a_{j+1}\ldots a_{n-1}) \\
&T'=T^{\uparrow A_T}=I(a_1\ldots a_{j+1} a_{j}\ldots a_{n-1})
\end{aligned}$$

\noindent {\it Case 1.} If  $a_n>k$ then $S'$ and $T'$ have still
the same inner tableau $R$.  Moreover  since  $S'\leq_{weak}T'$, we
have by induction
$$\mathcal{V}_{[R,R']}(S')\leq_{weak}\mathcal{V}_{[R,R']}(T')
$$
and therefore
$$\mathcal{V}_{[R,R']}(S)=\mathcal{V}_{[R,R']}(S')^{\downarrow a_n}
\leq_{weak}\mathcal{V}_{[R,R']}(T')^{\downarrow a_n}=\mathcal{V}_{[R,R']}(T)$$

\noindent {\it Case 2.}  If  $a_n\leq k$ then   the number $a_n$
leaves the tableaux through the sub-tableau $R$ in both reverse
insertion $S'=S^{\uparrow A_S}$ and $T'=T^{\uparrow A_T}$. Recall
that the only two corners of $R$ is labeled by $k$ and $k-1$. This
result    by Lemma \ref{hook.lemma2} that $a_n$ leaves $R$ as a
result of the reverse insertion algorithm applied on the same corner
cell say $C$, i.e.,
$$a_n= \eta(R^{\uparrow C})$$
Recall that $R$ has more
than two rows and two columns which leaves us with  the following
possibilities:
\begin{equation}\label{hook.lemma}
\tableau{{*}&_{k-2}& _k \\{*}\\ _{k-1}}~ \mbox{  }~~ \tableau{{*}&
{*}& _k
\\_{k-2}
\\_{k-1} }~ \mbox{  }~~
\tableau{{*}&_{k-2}& _{k-1} \\{*}\\ _{k}}~ \mbox{  }~~ \tableau{{*}&
{*}& _{k-1}
\\_{k-2}
\\_{k} }
\end{equation}

In the first two cases of  \eqref{hook.lemma}, one can observe
easily that either $a_n=k$ or
 $a_n<k-2$, but $a_n=k$ contradicts to the assumption that $\{a_j, a_{j+1}\}=\{k,k-2\}$.
Therefore $a_n<k-2$ and the application of  dual Knuth relation
determined by $\{k-2,k-1,k\}$ to the inner tableau $R'=R^{\uparrow
C}$ gives $Q'=Q^{\uparrow C}$.  Now by induction we have
$\mathcal{V}_{[R',Q']}(S')^{\downarrow a_n}
\leq_{weak}\mathcal{V}_{[R',Q']}(T'] $ and therefore
$$\mathcal{V}_{[R,Q]}(S)=\mathcal{V}_{[R',Q']}(S')^{\downarrow a_n}
\leq_{weak}\mathcal{V}_{[R',Q']}(T')^{\downarrow
a_n}=\mathcal{V}_{[R,Q]}(T).$$

In the last  two cases  \eqref{hook.lemma}, we have either either
$a_n=k-1$ or
 $a_n<k-2$. If  $a_n<k-2$ then the required result follows as
 above.  On the other hand if  $a_n=k-1$ then for  the tableau $R'=R^{\uparrow
C}$ we have the following possibilities
$$
\tableau{{*}&_{k-2} \\{*}\\ _{k}}~ \mbox{  }~~ \tableau{{*}& {*}
\\_{k-2}
\\_{k} }
$$
where in both cases every permutation in the Knuth class  have the
subsequence $k(k-2)$. This shows that any two permutations in the
Knuth class of $R$ that ends with $k-1$ must have the subsequence
$k(k-2)$ and this again contradicts to the fact that $\{a_j,
a_{j+1}\}=\{k,k-2\}$.

\end{proof}


\begin{thebibliography}{99}

\bibitem{Barbash-Vogan}
D. Barbash, D. Vogan, Primitive ideals and orbital integrals in
complex exceptional groups, {\it J. Algebra}, {\bf 80}, (1983),
350--382.

\bibitem{Bjorner} A. Bj\"orner,
 Topological Methods, {\it Handbook of Combinatorics,
 (R. Graham, M. Gr\"oschel and L. Lov\'asz, eds.)}, Elsevier, Amsterdam,
 (1995), 1819--1872.

\bibitem{Bjorner2}
A. Bj\"orner, M. L. Wachs, Permutation Statistics and Linear
Extensions of Posets, {\it J. Combin. Theory Ser. A}, {\bf 58},
no.1, (1991), 85--114.


\bibitem{Geck}
M. Geck,  On the induction of Kazhdan--Lusztig cells, {\it Bull.
London Math. Soc.}, {\bf 35} (2003), 608--614.

\bibitem{Greene}
C. Greene, An extension of Schensted's theorem, {\it Adv. in Math.},
{\bf 14} (1974), 254--265.



\bibitem{Kazhdan-Lusztig}
D. Kazhdan, G. Lusztig, Representations of Coxeter groups and Hecke
algebras, {\it Invent. Math.}, {\bf 53} (1979), 165--184.

\bibitem{Knuth} D. E. Knuth, {\it The art of computer programming}
Vol.3, Addison-Wesley (1969), 49--72.



\bibitem{Knuth1}
D. E. Knuth, Permutations, matrices and generalized Young tableaux,
{\it Pacific J. Math.}, {\bf 34} (1970), 709--727.



\bibitem{Loday-Ronco} J. Loday, M. Ronco, Order structure on the
algebra of permutations and of planar binary trees, {\it J.
Algebraic Combin.}, {\bf 15} (2002), 253--270.


\bibitem{Lusztig2}
G. Lusztig, {\it Characters of reductive groups over a finite
filed}, Ann. Math. Stud.  107 (Princeton  University Press, 1984).



\bibitem{Malvenuto-Reutenauer}
C. Malvenuto, C.Reutenauer, Duality between Quasi-Symmetric
Functions and the Solomon Descent algebra, {\it J. Algebra}, {\bf
177} (1995), 967--982.


\bibitem{Melnikov1}
A. Melnikov, On orbital variety closures of $\mathfrak{sl}_{n}$, I.
Induced Duflo order,
 math.RT/0311472. {\it J. Algebra}, to appear.

\bibitem{Melnikov2}
A. Melnikov, On orbital variety closures of $\mathfrak{sl}_{n}$, II.
Descendants of a Richardson orbital variety, math.RT/0311474. {\it
J. Algebra,} to appear.

\bibitem{Melnikov3}
A. Melnikov, Irreducibility of the associated varieties of simple
highest modules in $\mathfrak{sl}_{n}$, {\it C.R.A.S.I}, {\bf 316},
(1993), 53--57.

\bibitem{Melnikov4}
A. Melnikov, On orbital variety closures of $\mathfrak{sl}_{n}$,
III. Geometric properties, math.RT/0507504. {\it J. Algebra,} to
appear.

\bibitem{Poirier-Reutenauer}
S. Poirier, C. Reutenuer, Alg\'ebres de Hopf de Tableaux, {\it Ann.
Sci. Math. Qu\'ebec}, {\bf 19}, (1995), no. 1, 79--90.



\bibitem{Sagan}
B.E. Sagan, {\it The Symmetric Group, Second edition}.
 Springer-Verlag, New York, Inc., (2001).

\bibitem{Schensted} C. Schensted, Longest increasing and decreasing
subsequences, {\it Canad. J. Math.}, {\bf 13}, (1961), 179--191.

\bibitem{Schutzenberger1}
M. P. Sch\"utzenberger, Quelques remarques sur une construction de
Schensted, {\it Math. Scand.}, {\bf 12}, (1963), 117--128.


\bibitem{Schutzenberger2}
M. P. Sch\"utzenberger, La correspondence de Robinson, {\it
Combinatoire et Repr\'esentation du Groupe Sym\'etrique}, Lecture
Notes in Math.,{\bf  579},  Springer, Berlin (1977), 59--135.

\bibitem{Taskin}
M. Taskin, Properties of four partial orders on standard Young
tableaux, {\it J. of Combin. Theory Ser. A.} (to appear).

\bibitem{Vogan}
D. Vogan, Ordering of the primitive spectrum of a semisimple Lie
algebra, {\it Math. Ann.} {\bf 248} (1980), 195--2003.



\end{thebibliography}
\end{document}